\g@addto@macro\th@plain{\thm@headpunct{}}
\newtheorem{theorem}{Theorem}[section]
\newtheorem{lemma}[theorem]{Lemma}
\newtheorem{corollary}[theorem]{Corollary}
\newcommand{\xx}{ {\textbf x} }
\newcommand{\ab}{ {\textbf a} }
\newcommand{\bb}{ {\textbf b} }
\newcommand{\cb}{ {\textbf c} }
\newcommand{\yy}{ {\textbf y} }
\newcommand{\ttt}{ {\textbf t} }
\newcommand{\sss}{ {\textbf s} }
\newcommand{\zz}{ {\textbf z} }
\newcommand{\ee}{ {\textbf e} }
\newcommand{\ub}{ {\textbf u} }
\newcommand{\VV}{ \Omega }
\newcommand{\RR}{\mathbb{R}}
\newcommand{\LL}{\mathbb{L}}
\newcommand{\PP}{\mathbb{P}}
\newcommand{\En}{\mathbb{E}}
\newcommand{\DD}{\mathcal{D}}
\newcommand{\diag}{\mathrm{diag}\,}
\newcommand{\Trace}{\mathrm{Trace}\,}
\newcommand{\DDet}{\mathrm{Det}}
\providecommand{\scalar}[1]{\left\langle#1\right\rangle}
\newcommand{\gw}{ {\mathfrak g} }
\newcommand{\ww}{ {\mathfrak w} }
\newcommand{\we}{ {\mathfrak w}_\ee }
\newcommand{\twe}{ \widetilde{\mathfrak w}_\ee }
\newcommand{\gwe}{ {\mathfrak g}_\ee }
\newcommand{\tgwe}{ \widetilde{\mathfrak g}_\ee }
\title[Fundamental equation of information]{The generalized fundamental equation of information on symmetric cones}
\author[B. Ko\l{}odziejek]{Bartosz Ko\l{}odziejek}
\address{Faculty of Mathematics and Information Science\\Warsaw University of Technology\\Pl. Politechniki 1\\00-661 Warszawa, Poland}
\email{kolodziejekb@mini.pw.edu.pl}
\keywords{Fundamental equation of information; Division algorithm; Symmetric cones; Functional equations}
\subjclass[2010]{Primary 39B52.}
\begin{document}

\begin{abstract}
In this paper we generalize the fundamental equation of information to the symmetric cone domain and find general solution under the assumption of continuity of respective functions. 
 \end{abstract}
\maketitle

\section{Introduction}
The generalized fundamental equation of information (actually its specification with "constant multiplicative function") on the open vector domain is a functional equation of the form
\begin{align}\label{11dim}
F(x)+ G\left(\tfrac{y}{\underline{1}-x}\right)=H(y)+K\left(\tfrac{x}{\underline{1}-y}\right),
\end{align}
where $(x,y)\in \mathrm{D}_0=\left\{(x,y)\in(0,1)^r\times(0,1)^r\colon \forall i\in\{1,2,\ldots,r\}\,x_i+y_i\in(0,1)\right\}$ and $\underline{1}=(1,1,\ldots,1)\in\RR^r$. All operations in \eqref{11dim} are performed component-wise. First it was solved for $r=1$ by \cite{Mak1982} and later for any $r\in\mathbb{N}$ in \cite{EKN87}. There exists a vast literature regarding fundamental equation of information and its generalizations. History of main results together with references may be found in \cite{AczDar75,AczelNg83,Sand87}. 

Our aim is to analyze following generalization of \eqref{11dim}, when respective functions are defined on matrices. Let $\VV_+$ denote the cone of positive definite real symmetric matrices of rank $r$, $I$ is the identity matrix and let $\DD_+=\left\{\xx\in\VV_+\colon I-\xx\in\VV_+ \right\}$ be the analogue of $(0,1)$ interval in $\VV_+$. Consider unknown real functions $f$, $g$, $h$ and $k$ defined on $\DD_+$ that satisfy following functional equation
\begin{align}\label{2dim}
f(\xx)+g\left((I-\xx)^{-1/2}\cdot \yy\cdot (I-\xx)^{-1/2}\right)=h(\yy)+k\left((I-\yy)^{-1/2}\cdot\xx\cdot(I-\yy)^{-1/2}\right),
\end{align}
where $(\xx,\yy)\in\DD_0^+=\left\{(\xx,\yy)\in\DD_+^2\colon \xx+\yy\in\DD_+\right\}$ and $\cdot$ denotes the ordinary matrix product. Take $\xx=\ub\cdot\diag(x)\cdot\ub^T$ and $\yy=\ub\cdot\diag(y)\cdot\ub^T$, where $\ub$ is a fixed orthogonal matrix and $(x,y)\in\mathrm{D}_0$. Then $(\xx,\yy)\in\DD_0^+$, $\xx$ and $\yy$ commute and $(I-\xx)^{-1/2}=\ub\cdot\diag(\underline{1}-x)^{-1/2}\cdot \ub^T$. Thus, \eqref{2dim} gets the form
\begin{align}\label{3dim}
f_\ub(x)+ g_\ub\left(\tfrac{y}{\underline{1}-x}\right)=h_\ub(y)+k_\ub\left(\tfrac{x}{\underline{1}-y}\right)
\end{align}
for $(x,y)\in\mathrm{D}_0$, where $f_\ub(x)=f(\ub\cdot\diag(x)\cdot\ub^T)$, $g_\ub(x)=h(\ub\cdot\diag(x)\cdot\ub^T)$ and so on. This means that \eqref{2dim} is a generalization of \eqref{11dim} to a wider domain, since \eqref{3dim} is satisfied for any orthogonal matrix $\ub$ and when one takes non commutative $\xx$ and $\yy$, the situation is far more complicated, because the operations are not performed component-wise. It also justifies the name ``generalized fundamental equation of information on $\VV_+$'', despite its lack of clear connection to the developed information theory.

We want to go further and consider more general notion of division of matrices, which is defined through, so-called, multiplication algorithm. A multiplication algorithm is a mapping $w\colon\VV_+\mapsto GL(r,\RR)$ such that $w(\xx)\cdot w^T(\xx)=\xx$ for any $\xx\in\VV_+$. Multiplication algorithms (actually their inverses called division algorithms) were introduced by \cite{OlRu1962} alongside the characterization of Wishart probability distribution (see also \cite{CaLe1996} for generalization to symmetric cones). There exists infinite number of multiplication algorithms and the two basic examples of multiplication algorithms are $w_1(\xx)=\xx^{1/2}$ ($\xx^{1/2}$ being the unique positive definite symmetric square root of $\xx$) and $w_2(\xx)=t_\xx$, where $t_\xx$ is the lower triangular matrix from the Cholesky decomposition of $\xx=t_\xx\cdot t_\xx^T$. For unknown functions $f, g, h, k\colon \DD_+\to\RR$ we will analyze following functional equation
\begin{align}\label{Vplus}
f(\xx)+g\left(w(I-\xx)^{-1}\cdot \yy\cdot (w^T(I-\xx))^{-1}\right)=h(\yy)+k\left(\widetilde{w}(I-\yy)^{-1}\cdot\xx\cdot(\widetilde{w}^T(I-\yy))^{-1}\right),
\end{align}
where $(\xx,\yy)\in\DD_0^+$, $w$ and $\widetilde{w}$ are two multiplication algorithms satisfying additionally some natural properties. Note that for $(\xx,\yy)\in\DD_0^+$ the arguments of functions $g$ and $k$ belong to $\DD_+$.

Considering such generalization of \eqref{2dim}, in general, it is not possible to reduce \eqref{Vplus} to \eqref{11dim} as was possible for \eqref{2dim}. However, taking scalar matrices ($\xx=x I$ and $\yy=y I$ for $(x,y)\in (0,1)^2$ such that $x+y\in(0,1)$) equation \eqref{Vplus} comes down to \eqref{11dim} with $r=1$. This fact will be the crux in the proof of the main theorem.
The continuous solution to \eqref{Vplus} will be given in terms of, so-called, $w$-logarithmic Cauchy functions, i.e., functions that satisfy following functional equation 
\begin{align*}
f(\xx)+f(w(I)\cdot \yy\cdot w^T(I))=f(w(\xx)\cdot\yy\cdot w^T(\xx)),\quad (\xx,\yy)\in\VV_+.
\end{align*}
It is easy to see that $f(\xx)=H(\det\xx)$, where $H$ is a generalized logarithmic function ($H(ab)=H(a)+H(b)$, $a,b>0$), is always a $w$-logarithmic function for any $w$, but sometimes not the only one (see comment before Corollary \ref{cor3}). The general form of $w$-logarithmic Cauchy functions for two basic examples of multiplication algorithms $w_1(\xx)=\xx^{1/2}$ and $w_1(\xx)=t_\xx$ without any regularity assumptions were recently found in \cite{wC2013}. Later on we will write $\ww(\xx)\yy = w(\xx)\cdot \yy\cdot w^T(\xx)$, that is, in this case, $\ww(\xx)$ denotes the linear operator acting on $\VV_+$.

Finally, it must be noted that equation \eqref{Vplus} despite the lack of clear connection to the information theory is interesting to probabilists, since it is closely related to the characterization of matrix variate beta probability distribution. In \cite{WesBeta} and \cite{LajMe2009,LajMes12}, the problem of characterization of beta probability distribution was reduced to solving the fundamental equation of information with four unknown functions. Analogously, the solution to \eqref{Vplus} is used to characterize the matrix-variate beta probability distribution in \cite{BKBeta2014}.

All above considerations can be generalized to symmetric cones, of which $\VV_+$ is the prime example. The paper is organized as follows. In the next section we give necessary introduction to the theory of symmetric cones. Next, in Section 3.1 we recall known results concerning $w$-logarithmic Cauchy functions. Section 3.2 is devoted to the solution of \eqref{Vplus} in the symmetric cone setting.

\section{Preliminaries}
In this section, we recall basic facts of the theory of symmetric cones, which are needed in the paper. For further details, we refer to \citet{FaKo1994}. 

A \textit{Euclidean Jordan algebra} is a Euclidean space $\En$ (endowed with scalar product denoted $\scalar{\xx,\yy}$) equipped with a bilinear mapping (product)
\begin{align*}
\En\times\En \ni \left(\xx,\yy\right)\mapsto \xx\yy\in\En
\end{align*}
and a neutral element $\ee$ in $\En$ such that for all $\xx$, $\yy$, $\zz$ in $\En$:
\begin{enumerate}[(i)]
	\item $\xx\yy=\yy\xx$, 
	\item $\xx(\xx^2\yy)=\xx^2(\xx\yy)$,
	\item $\xx\ee=\xx$,
	\item $\scalar{\xx,\yy\zz}=\scalar{\xx\yy,\zz}$.
\end{enumerate}
For $\xx\in\En$ let $\LL(\xx)\colon \En\to\En$ be linear map defined by
\begin{align*}
\LL(\xx)\yy=\xx\yy,
\end{align*}
and define 
\begin{align*}
\PP(\xx)=2\LL^2(\xx)-\LL\left(\xx^2\right).
\end{align*} 
The map $\PP\colon \En\mapsto End(\En)$ is called the \emph{quadratic representation} of $\En$.

An element $\xx$ is said to be \emph{invertible} if there exists an element $\yy$ in $\En$ such that $\LL(\xx)\yy=\ee$. Then, $\yy$ is called the \emph{inverse of} $\xx$ and is denoted by $\yy=\xx^{-1}$. Note that the inverse of $\xx$ is unique. It can be shown that $\xx$ is invertible if and only if $\PP(\xx)$ is invertible and in this case $\left(\PP(\xx)\right)^{-1} =\PP\left(\xx^{-1}\right)$.

Euclidean Jordan algebra $\En$ is said to be \emph{simple} if it is not a \mbox{Cartesian} product of two Euclidean Jordan algebras of positive dimensions. Up to linear isomorphism there are only five kinds of Euclidean simple Jordan algebras. Let $\mathbb{K}$ denote either the real numbers $\RR$, the complex ones $\mathbb{C}$, quaternions $\mathbb{H}$ or the octonions $\mathbb{O}$, and write $S_r(\mathbb{K})$ for the space of $r\times r$ Hermitian matrices valued in $\mathbb{K}$, endowed with the Euclidean structure $\scalar{\xx,\yy}=\Trace(\xx\cdot\bar{\yy})$ and with the Jordan product
\begin{align}\label{defL}
\xx\yy=\tfrac{1}{2}(\xx\cdot\yy+\yy\cdot\xx),
\end{align}
where $\xx\cdot\yy$ denotes the ordinary product of matrices and $\bar{\yy}$ is the conjugate of $\yy$. Then, $S_r(\RR)$, $r\geq 1$, $S_r(\mathbb{C})$, $r\geq 2$, $S_r(\mathbb{H})$, $r\geq 2$, and the exceptional $S_3(\mathbb{O})$ are the first four kinds of Euclidean simple Jordan algebras. Note that in this case 
\begin{align}\label{defP}
\PP(\yy)\xx=\yy\cdot\xx\cdot\yy.
\end{align}
The fifth kind is the Euclidean space $\RR^{n+1}$, $n\geq 2$, with Jordan product
\begin{align}\label{scL}\begin{split}
\left(x_0,x_1,\dots, x_n\right)\left(y_0,y_1,\dots,y_n\right) =\left(\sum_{i=0}^n x_i y_i,x_0y_1+y_0x_1,\dots,x_0y_n+y_0x_n\right).
\end{split}
\end{align}

To each Euclidean simple Jordan algebra one can attach the set of Jordan squares
\begin{align*}
\bar{\VV}=\left\{\xx^2\colon\xx\in\En \right\}.
\end{align*}
The interior $\VV$ is a symmetric cone.
Moreover $\VV$ is \emph{irreducible}, i.e. it is not the Cartesian product of two convex cones. One can prove that an open convex cone is symmetric and irreducible if and only if it is the cone $\VV$ of some Euclidean simple Jordan algebra. Each simple Jordan algebra corresponds to a symmetric cone; hence, there exist up to linear isomorphism also only five kinds of symmetric cones. The cone corresponding to the Euclidean Jordan algebra $\RR^{n+1}$ equipped with Jordan product \eqref{scL} is called the Lorentz cone. Henceforth we will assume that $\VV$ is irreducible.

We denote by $G(\En)$ the subgroup of the linear group $GL(\En)$ of linear automorphisms which preserves $\VV$, and we denote by $G$ the connected component of $G(\En)$ containing the identity.  Recall that if $\En=S_r(\RR)$ and $GL(r,\RR)$ is the group of invertible $r\times r$ matrices, elements of $G(\En)$ are the maps $g\colon\En\to\En$ such that there exists $\ab\in GL(r,\RR)$ with
\begin{align*}
g(\xx)=\ab\cdot\xx\cdot\ab^T.
\end{align*}
We define $K=G\cap O(\En)$, where $O(\En)$ is the orthogonal group of $\En$. It can be shown that 
\begin{align*}
K=\{ k\in G\colon k\ee=\ee \}.
\end{align*} 

A \emph{multiplication algorithm} is a map $\VV\to G\colon \xx\mapsto \ww(\xx)$ such that $\ww(\xx)\ee=\xx$ for all $\xx\in\VV$. This concept is consistent with, so-called, division algorithm $\gw$, which was introduced by \citet{OlRu1962} and \citet{CaLe1996}, that is a mapping $\VV\ni\xx\mapsto \gw(\xx)\in G$ such that $\gw(\xx)\xx=\ee$ for any $\xx\in\VV$. If $\ww$ is a multiplication algorithm, then $\gw=\ww^{-1}$ is a division algorithm and vice versa; if $\gw$ is a division algorithm, then $\ww=\gw^{-1}$ is a multiplication algorithm. 
Note that $\VV$ is closed under multiplication $\xx\circ_\ww\yy=\ww(\xx)\yy$, but this multiplication is not commutative nor associative. It may also not have neutral element, but always $\ww(\ee)\in K$.
One of the two basic examples of multiplication algorithms is the map $\ww_1(\xx)=\PP\left(\xx^{1/2}\right)$. 

We will now introduce a very useful decomposition in $\En$, called \emph{spectral decomposition}. An element $\cb\in\En$ is said to be a \emph{idempotent} if $\cb\cb=\cb\neq 0$. Idempotents $\ab$ and $\bb$ are \emph{orthogonal} if $\ab\bb=0$. Idempotent $\cb$ is \emph{primitive} if $\cb$ is not a sum of two non-null idempotents. A \emph{complete system of primitive orthogonal idempotents} is a set $\left(\cb_1,\dots,\cb_r\right)$ such that
\begin{align*}
\sum_{i=1}^r \cb_i=\ee\quad\mbox{and}\quad\cb_i\cb_j=\delta_{ij}\cb_i\quad\mbox{for } 1\leq i\leq j\leq r.
\end{align*}
The size $r$ of such system is a constant called the \emph{rank} of $\En$. Any element $\xx$ of a Euclidean simple Jordan algebra can be written as $\xx=\sum_{i=1}^r\lambda_i\cb_i$ for some complete $\left(\cb_1,\dots,\cb_r\right)$ system of primitive orthogonal idempotents. The real numbers $\lambda_i$, $i=1,\dots,r$ are the \emph{eigenvalues} of $\xx$. An element $\xx\in\En$ belongs to $\VV$ if and only if all its eigenvalues are strictly positive. One can then define \emph{determinant} of $\xx$ by $\det\xx=\prod_{i=1}^r\lambda_i$. 

By \cite[Proposition III.4.3]{FaKo1994}, for any $g$ in the group $G$,
\begin{align*}
\det(g\xx)=(\DDet\,g)^{r/\dim\VV}\det\xx,
\end{align*}
where $\DDet$ denotes the determinant in the space of endomorphisms on $\VV$. Inserting a multiplication algorithm $g=\ww(\yy)$, $\yy\in\VV$, and $\xx=\ee$ we obtain
\begin{align*}
\DDet\left(\ww(\yy)\right) =(\det\yy)^{\dim\VV/r}
\end{align*}
and hence
\begin{align}\label{eqdetw}
\det(\ww(\yy)\xx) =\det\yy\det\xx
\end{align}
for any $\xx,\yy\in\VV$.

\section{Functional equations}\label{funeq}

\subsection{Logarithmic Cauchy functions}
Henceforth we will assume that $\VV$ is an irreducible symmetric cone.
As will be seen, the solution to fundamental equation of information will be given in terms of, so-called, $\ww$-logarithmic Cauchy functions, i.e., functions $f\colon\VV\to\RR$ that satisfy following functional equation 
\begin{align}\label{wC}
f(\xx)+f(\ww(\ee)\yy)=f(\ww(\xx)\yy),\quad (\xx,\yy)\in\VV^2,
\end{align}
where $\ww$ is a multiplication algorithm. Functional equation \eqref{wC} for $\ww_1(\xx)=\PP(\xx^{1/2})$ on $\VV_+$ was already considered in \cite{BW2003} for differentiable functions and in \cite{Molnar2006} for continuous functions on real or complex Hermitian positive definite matrices of rank greater than $2$. Without any regularity assumptions, it was solved on the Lorentz cone by \cite{Wes2007L}. Recently, the general form of $\ww_1$-logarithmic functions without any regularity assumptions was given in \cite{wC2013}. In this case $f(\xx)=H(\det\xx)$, where $H$ is generalized logarithmic function, i.e., $H(ab)=H(a)+H(b)$ for $a,b>0$ (see Theorem \ref{w1th}).

It should be stressed that there exists infinite number of multiplication algorithms. If $\ww$ is a multiplication algorithm, then trivial extensions are given by $\ww^{(k)}(\xx) = \ww(\xx)k$, where $k\in K$ is fixed. One may consider also multiplication algorithms of the form $P(\xx^{\alpha})t_{\xx^{1-2\alpha}}$, which interpolates between the two main examples: $w_1$ (which is $\alpha= 1/2$) and $w_2$ (which is $\alpha = 0$). In general, any multiplication algorithm may be written in the form $w(\xx) = \PP(\xx^{1/2})k_\xx$, where $k_\xx\in K$ and $K$ is the group of automorhisms.

Note that due to \eqref{eqdetw} function $H(\det\xx)$ is always a solution to \eqref{wC}, regardless of the choice of multiplication algorithm $\ww$, but may be not the only one - the best example is the multiplication algorithm related to the triangular group (see \cite[Theorem 3.5]{wC2013} and comment before Corollary \ref{cor3}). If a $\ww$-logarithmic functions $f$ is additionally $K$-invariant ($f(\xx)=f(k\xx)$ for any $k\in K$), then $H(\det\xx)$ is the only possible solution (Theorem \ref{XXX}). We now state the above mentioned results, which will be useful in the proof of the main theorem.

\begin{theorem}[$\ww_1$-logarithmic Cauchy functional equation]\label{w1th}
Let $f\colon \VV\to\RR$ be a function such that
\begin{align*}
f(\xx)+f(\yy)=f\left(\PP\left(\xx^{1/2}\right)\yy\right),\quad (\xx,\yy)\in\VV^2.
\end{align*}
Then there exists a generalized logarithmic function $H$ such that for any $\xx\in\VV$,
\begin{align*}
f(\xx)=H(\det\xx).
\end{align*}
\end{theorem}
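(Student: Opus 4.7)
The plan is to combine a spectral reduction — which turns the Jordan-algebraic equation into the classical multivariate Cauchy logarithmic equation on $\RR_{+}^{r}$ — with a rigidity argument coming from the $K$-symmetry of $\VV$, which collapses the many possible logarithmic solutions into a single $H\circ\det$.

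I would first extract elementary consequences of the equation. Setting $\xx=\yy=\ee$ yields $f(\ee)=0$; the choice $\yy=\xx^{-1}$ (using $\PP(\xx^{1/2})\xx^{-1}=\ee$, which follows from $\PP(\xx^{1/2})=\PP(\xx)^{1/2}$ and $\xx^{-1}=\PP(\xx^{-1/2})\ee$) gives $f(\xx^{-1})=-f(\xx)$; and $\yy=\xx$ gives $f(\xx^{2})=2f(\xx)$. Swapping $\xx\leftrightarrow\yy$ in the equation yields the fundamental symmetry
\[
f\bigl(\PP(\xx^{1/2})\yy\bigr)=f\bigl(\PP(\yy^{1/2})\xx\bigr),\qquad \xx,\yy\in\VV.
\]

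Next I fix a Jordan frame $(\cb_{1},\ldots,\cb_{r})$ and define $\phi\colon\RR_{+}^{r}\to\RR$ by $\phi(\lambda)=f\bigl(\sum_{i}\lambda_{i}\cb_{i}\bigr)$. When $\xx$ and $\yy$ share this frame, spectral calculus gives $\xx^{1/2}=\sum_{i}\lambda_{i}^{1/2}\cb_{i}$ and $\PP(\xx^{1/2})\yy=\sum_{i}\lambda_{i}\mu_{i}\cb_{i}$, so the equation collapses to $\phi(\lambda)+\phi(\mu)=\phi(\lambda\mu)$ with coordinatewise multiplication. This is the classical multivariate Cauchy logarithmic equation, whose general (not necessarily regular) solution is $\phi(\lambda)=\sum_{i}H_{i}(\lambda_{i})$ for some generalized logarithms $H_{i}\colon\RR_{+}\to\RR$.

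The remaining and hardest step is to show that all $H_{i}$ coincide with a single $H$ and that $\phi(\lambda)$ depends only on the product $\prod_{i}\lambda_{i}=\det\xx$. For this I would exploit the transitive $K$-action on Jordan frames together with the fact that, for any $k\in K$, the substitution $(\xx,\yy)\mapsto(k\xx,k\yy)$ preserves the equation: indeed each $k\in K$ is a Jordan automorphism, which gives $(k\xx)^{1/2}=k\xx^{1/2}$ and $\PP(k\xx)=k\PP(\xx)k^{-1}$. Comparing the frame-restricted representations of $f$ on $(\cb_{i})$ and $(k\cb_{i})$, and using the determinant identity $\det(\PP(\xx^{1/2})\yy)=\det\xx\det\yy$ from \eqref{eqdetw}, should force first the permutation invariance of $\phi$ (by choosing $k$ that transposes two idempotents) and then the coincidence of the $H_{i}$ with a common $H$; a final check yields $f(\xx)=H(\det\xx)$. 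The principal obstacle is that no regularity of $f$ is assumed, so the collapse of the $H_{i}$ cannot come from any analytical device and must be forced entirely by the Jordan-algebraic rigidity produced by non-commuting pairs in $\VV$.
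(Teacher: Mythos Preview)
The paper does not prove this theorem; it is quoted from \cite{wC2013}, so there is no in-paper proof to compare your proposal against.

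On its own merits, your outline is correct and standard through the end of Step~2: the frame restriction does give the multivariate logarithmic equation $\phi(\lambda\mu)=\phi(\lambda)+\phi(\mu)$ on $\RR_{+}^{r}$, and its general solution is indeed $\phi(\lambda)=\sum_{i}H_{i}(\lambda_{i})$ with each $H_{i}$ a generalized logarithm (this needs no regularity: any additive $A\colon\RR^{r}\to\RR$ splits as $\sum_{i}A(0,\dots,t,\dots,0)$). Once all $H_{i}$ coincide with a single $H$, the identity $\sum_{i}H(\lambda_{i})=H(\prod_{i}\lambda_{i})$ finishes the proof.

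The genuine gap is Step~3. Your permutation argument via $K$ tacitly assumes $f(k\xx)=f(\xx)$: choosing $k$ that swaps $\cb_{i}$ and $\cb_{j}$ compares $H_{i}(\lambda)+H_{j}(\mu)$ with $H_{i}(\mu)+H_{j}(\lambda)$, but these are values of $f$ at \emph{different} points $\xx$ and $k\xx$, and nothing you have written forces them to agree. The functional equation is $K$-equivariant (if $f$ solves it, so does $f\circ k$), but that is far weaker than $K$-invariance of $f$ itself. You correctly flag that ``non-commuting pairs in $\VV$'' must supply the rigidity, yet you give no mechanism. The actual argument in \cite{wC2013} works inside rank-$2$ Peirce subalgebras: one takes $\xx$ diagonal in the frame and $\yy$ with a nontrivial off-diagonal component in $\En_{ij}$, computes the eigenvalues of $\PP(\xx^{1/2})\yy$ explicitly, and uses the resulting identities to force $H_{i}=H_{j}$. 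This computation is the entire content of the theorem, and your proposal stops exactly where it begins.
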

\begin{theorem}\label{XXX}
Let $f\colon\VV\to\RR$ be a function satisfying \eqref{wC}. Assume additionally that $f$ is $K$-invariant, i.e., $f(k\xx)=f(\xx)$ for any $k\in K$ and $\xx\in\VV$. Then there exists a generalized logarithmic function $H$ such that for any $\xx\in\VV$,
$$f(\xx)=H(\det\xx).$$
\end{theorem}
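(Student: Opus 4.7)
The plan is to reduce the hypothesis directly to the already-proved $\ww_1$-logarithmic Cauchy equation (Theorem \ref{w1th}) by exploiting $K$-invariance at two places.

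First I would simplify the left-hand side of \eqref{wC}. As noted in the paper, for any multiplication algorithm one has $\ww(\ee)\in K$, so by $K$-invariance of $f$ we get $f(\ww(\ee)\yy)=f(\yy)$. Consequently \eqref{wC} reduces to
\begin{align*}
f(\xx)+f(\yy)=f(\ww(\xx)\yy),\qquad (\xx,\yy)\in\VV^2.
\end{align*}

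Next I would unpack $\ww(\xx)$ itself. By the structural remark recalled in the paper, every multiplication algorithm can be written in the form $\ww(\xx)=\PP(\xx^{1/2})k_\xx$ for some $k_\xx\in K$ depending on $\xx$. Therefore
\begin{align*}
\ww(\xx)\yy=\PP(\xx^{1/2})(k_\xx \yy).
\end{align*}
For fixed $\xx$, the map $\yy\mapsto k_\xx\yy$ is a bijection of $\VV$ onto itself, so I can substitute $\yy'=k_\xx\yy$; using $K$-invariance once more to replace $f(\yy)=f(k_\xx^{-1}\yy')=f(\yy')$, the previous equation becomes
\begin{align*}
f(\xx)+f(\yy')=f\bigl(\PP(\xx^{1/2})\yy'\bigr),\qquad (\xx,\yy')\in\VV^2.
\end{align*}

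This is precisely the $\ww_1$-logarithmic Cauchy equation, and Theorem \ref{w1th} yields the existence of a generalized logarithmic function $H$ with $f(\xx)=H(\det\xx)$, which is the claimed conclusion.

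There is no real obstacle here beyond verifying that the two applications of $K$-invariance are legitimate; the only point to be slightly careful about is that the decomposition $\ww(\xx)=\PP(\xx^{1/2})k_\xx$ is available for every $\xx\in\VV$ (so that the substitution $\yy'=k_\xx\yy$ is admissible for all $\xx$) and that $k_\xx$ preserves $\VV$ so $\yy'$ ranges over all of $\VV$ as $\yy$ does, both of which are immediate from $K=G\cap O(\En)\subset G(\En)$.
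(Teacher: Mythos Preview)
Your argument is correct. The paper itself does not supply a proof of Theorem~\ref{XXX}; it is merely quoted (together with Theorem~\ref{w1th}) as a known result from \cite{wC2013}, so there is no ``paper's own proof'' to compare against. Your reduction to Theorem~\ref{w1th} via $K$-invariance---first absorbing $\ww(\ee)\in K$ on the left, then writing $\ww(\xx)=\PP(\xx^{1/2})k_\xx$ and substituting $\yy'=k_\xx\yy$---is clean and complete; the two points you flag (that $k_\xx$ exists for every $\xx$ and that $k_\xx$ maps $\VV$ onto $\VV$) are indeed the only things to check, and both follow from $K$ being a subgroup of $G$.
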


Another results of \cite{wC2013} that we will need in the proof of the main theorem is the solution to Pexiderized version of \eqref{wC}:
\begin{lemma}[$\ww$-logarithmic Pexider functional equation]\label{lem1}
Assume that $a$, $b$, $c$ are real functions defined on the cone $\Omega$ satisfy following functional equation
\begin{align*}
a(\xx)+b(\yy)=c(\ww(\xx)\yy),\quad (\xx,\yy)\in\VV^2.
\end{align*}
Then there exist $\ww$-logarithmic function $f$ and real constants $a_0, b_0$ such that for any $\xx\in\VV$,
\begin{align*}
a(\xx) & =f(\xx)+a_0,\\
b(\xx) & =f(\ww(\ee)\xx)+b_0,\\
c(\xx) & =f(\xx)+a_0+b_0.
\end{align*}
\end{lemma}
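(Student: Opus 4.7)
The plan is to reduce the Pexiderized equation to the $\ww$-logarithmic Cauchy equation \eqref{wC} by peeling off constants at distinguished points of the cone. The natural test points are $\xx=\ee$ and $\yy=\ee$, exploiting $\ww(\xx)\ee=\xx$.

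First I would substitute $\yy=\ee$ into the hypothesis. Since $\ww(\xx)\ee=\xx$, this immediately yields
\begin{align*}
c(\xx)=a(\xx)+b(\ee).
\end{align*}
Setting $b_0:=b(\ee)$ already pins down $c$ in terms of $a$. Substituting this expression for $c$ back into the original equation gives, for all $(\xx,\yy)\in\VV^2$,
\begin{align*}
a(\xx)+b(\yy)=a(\ww(\xx)\yy)+b_0.
\end{align*}
Now I would set $\tilde b(\yy):=b(\yy)-b_0$, so $\tilde b(\ee)=0$, and the equation becomes $a(\xx)+\tilde b(\yy)=a(\ww(\xx)\yy)$. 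Inserting $\xx=\ee$ and using $\ww(\ee)\in K\subset G$ (so the right-hand side makes sense for all $\yy\in\VV$) produces the explicit formula
\begin{align*}
\tilde b(\yy)=a(\ww(\ee)\yy)-a(\ee).
\end{align*}

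Next, set $a_0:=a(\ee)$ and define $f(\xx):=a(\xx)-a_0$, so $f(\ee)=0$. The identity above becomes $\tilde b(\yy)=f(\ww(\ee)\yy)$, and substituting back into $a(\xx)+\tilde b(\yy)=a(\ww(\xx)\yy)$ produces
\begin{align*}
f(\xx)+f(\ww(\ee)\yy)=f(\ww(\xx)\yy),\qquad (\xx,\yy)\in\VV^2,
\end{align*}
which is exactly the $\ww$-logarithmic Cauchy functional equation \eqref{wC}. Thus $f$ is $\ww$-logarithmic, and unwinding the definitions gives $a(\xx)=f(\xx)+a_0$, $b(\yy)=f(\ww(\ee)\yy)+b_0$, and $c(\xx)=a(\xx)+b_0=f(\xx)+a_0+b_0$, as required.

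I do not anticipate a serious obstacle here: the argument is a textbook Pexiderization, and everything works because $\ww(\xx)\ee=\xx$ and $\ww(\ee)\in K\subset G$ is an automorphism of $\VV$, so substituting $\xx=\ee$ or $\yy=\ee$ is always admissible and the derived relations hold on all of $\VV$. The one point that deserves a line of verification is that the resulting $f$ really satisfies \eqref{wC} on all of $\VV^2$ (not merely for $\yy$ of the form $\ww(\ee)\yy'$), but this is automatic since $\ww(\ee)$ is a bijection of $\VV$.
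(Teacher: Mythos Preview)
Your argument is correct and is the standard reduction of a Pexider equation to its Cauchy form: specialize at $\yy=\ee$ (using $\ww(\xx)\ee=\xx$) to relate $c$ to $a$, then at $\xx=\ee$ to relate $b$ to $a$, and verify that $f:=a-a(\ee)$ satisfies \eqref{wC}. The paper itself does not prove this lemma; it quotes it from \cite{wC2013}, and your proof is presumably the same as (or close to) the one given there, since there is essentially only one natural route.
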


\subsection{The fundamental equation of information with four unknown functions on symmetric cones}

Solution to the one-dimensional version of the fundamental equation with four unknown functions comes from \cite{Mak1982}. Independent, shorter, but with additional assumption of local integrability of functions was given by \cite{WesBeta} alongside the characterization of beta probability distribution. The problem when the equation is satisfied almost everywhere for measurable functions was considered in \cite{LajMe2009,LajMes12}. Recall the main result of \cite{Mak1982} (for $\alpha=0$ with the substitution of $H_1+H_3$ in place of $H_1$ compared to the original formulation):

\begin{theorem}[Gy. Maksa (1982)]\label{1dim}
Let $\mathrm{D}_0=\left\{(x,y)\in(0,1)^2\colon x+y\in(0,1)\right\}$ and assume that functions $F, G, H, K\colon(0,1)\to\RR$ satisfy \eqref{11dim} for any $(x,y)\in \mathrm{D}_0$. Then there exist generalized logarithmic functions $H_1, H_2, H_3$ and real constants $C_i$, $i=1,\ldots,4$, such that for any $x\in(0,1)$,
\begin{align*}
F(x) & = H_1(1-x)+H_2(x)+H_3(1-x)+C_1, \\
G(x) & = H_1(1-x)+H_3(x)+C_2, \\
H(x) & = H_1(1-x)+H_2(1-x)+H_3(x)+C_3, \\
K(x) & = H_1(1-x)+H_2(x)+C_4,
\end{align*}
and $C_1+C_2 = C_3+C_4$.
\end{theorem}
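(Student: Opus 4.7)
The plan is to bijectively transform the triangle $\mathrm{D}_0$ into an unrestricted product domain, reduce the equation to Pexider--Cauchy equations in one variable, apply Lemma \ref{lem1} in its simplest one-dimensional incarnation, and then reassemble.

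First, I would introduce $u = y/(1-x)$, $v = x/(1-y)$. Elementary computation shows that $(x,y)\mapsto(u,v)$ bijects $\mathrm{D}_0$ onto $(0,1)^2$ with inverse $x = v(1-u)/(1-uv)$, $y = u(1-v)/(1-uv)$, and companion identities $1-x = (1-v)/(1-uv)$, $1-y = (1-u)/(1-uv)$, $1-x-y = (1-u)(1-v)/(1-uv)$. A further \emph{odds} substitution $s = v/(1-v)$, $t = u/(1-u)$ maps $(0,1)^2$ onto $(0,\infty)^2$, and, via the identity $v(1-u)/(1-uv) = s/(1+s+t)$ together with its symmetric counterpart, transforms \eqref{11dim} into
\begin{equation*}
F\!\left(\tfrac{s}{1+s+t}\right) + G\!\left(\tfrac{t}{1+t}\right) = H\!\left(\tfrac{t}{1+s+t}\right) + K\!\left(\tfrac{s}{1+s}\right), \quad (s,t)\in(0,\infty)^2. \tag{$\ast$}
\end{equation*}

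Second, I would difference $(\ast)$ at $(s,t)$ versus $(s,t_0)$ (cancelling $K$) and at $(s,t)$ versus $(s_0,t)$ (cancelling $G$). After a careful multiplicative pairing of $t$ with $t_0$ and $s$ with $s_0$, the surviving arguments organize into products, yielding Pexider--Cauchy equations of the classical shape $a(\sigma) + b(\tau) = c(\sigma\tau)$ on $(0,\infty)$. This is exactly Lemma \ref{lem1} in the case $\VV = (0,\infty)$, $\ww_1(x) = \sqrt{x}$ (so that $\ww_1(x)y = xy$), and produces each unknown as a generalized logarithmic function up to an additive constant. Three essentially independent such equations will arise, generating $H_1$ (tied to the shared denominator $1-x-y$), $H_2$ (tied to the $x/(1-y)$ slot) and $H_3$ (tied to the $y/(1-x)$ slot).

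Third, I would back-substitute these logarithmic ingredients into $(\ast)$, read off $F, G, H, K$ in the claimed form with four residual additive constants $C_1,\ldots,C_4$, and evaluate $(\ast)$ at a convenient base point such as $s=t=1$ to extract the single linear relation $C_1+C_2 = C_3+C_4$. The main obstacle will be step two: because the arguments of $F$ and $H$ share the denominator $1+s+t$, a naive difference in $t$ (or in $s$) does not immediately yield a product $\sigma\tau$ inside $F$ (or $H$). The reference points $t_0,s_0$ and the accompanying multiplicative reparametrization must be chosen so that the ratios of $F$- and $H$-arguments at the two points collapse onto a common multiplicative parameter; once this ``multiplicative skeleton'' is exposed, the remainder of the argument is bookkeeping.
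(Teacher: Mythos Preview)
The paper does not prove Theorem~\ref{1dim}; it is quoted as ``the main result of \cite{Mak1982}'' and used as a black box throughout (in Lemmas~\ref{lfconst} and~\ref{H1} and in the proof of Theorem~\ref{inform}). So there is no proof in the paper to compare your proposal against. The paper does remark, just before Theorem~\ref{inform}, that Maksa's original argument proceeds by reducing \eqref{11dim} to the Olkin--Baker functional equation; that is a different mechanism from the direct Pexider--Cauchy reduction you outline.

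Your sketch has a genuine gap precisely at the point you yourself flag as the ``main obstacle.'' The reparametrization to $(\ast)$ is correct and elegant, but the claim that differencing $(\ast)$ at $(s,t)$ versus $(s,t_0)$ and then ``multiplicatively pairing'' produces equations of the form $a(\sigma)+b(\tau)=c(\sigma\tau)$ is not substantiated. After cancelling $K$, one is left with
\[
\Bigl[F\bigl(\tfrac{s}{1+s+t}\bigr)-F\bigl(\tfrac{s}{1+s+t_0}\bigr)\Bigr]+\Bigl[G\bigl(\tfrac{t}{1+t}\bigr)-G\bigl(\tfrac{t_0}{1+t_0}\bigr)\Bigr]=H\bigl(\tfrac{t}{1+s+t}\bigr)-H\bigl(\tfrac{t_0}{1+s+t_0}\bigr),
\]
in which the $F$-arguments have ratio $(1+s+t_0)/(1+s+t)$ and the $H$-arguments have ratio $t(1+s+t_0)/\bigl(t_0(1+s+t)\bigr)$; neither of these is a product of a function of a single ``$\sigma$'' and a function of a single ``$\tau$'', and no choice of fixed $t_0$ (or subsequent rescaling of $s$) makes them so, because the shared denominator $1+s+t$ genuinely couples the variables additively rather than multiplicatively. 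A second differencing in $s$ introduces a fourth reference point and still does not uncouple the variables. Without an explicit substitution that actually achieves the separation, step two is an assertion, not an argument; this is exactly why Maksa passes through the Olkin--Baker equation instead. If you want a self-contained proof, either carry out that Olkin--Baker reduction, or supply the missing ``multiplicative skeleton'' explicitly.
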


Henceforth we will assume that multiplication algorithms $\ww$ additionally satisfies following natural conditions
\begin{enumerate}[A.]
\item $\ww$ is homogeneous of degree $1$, that is $\ww(s\xx)=s\ww(\xx)$ for any $s>0$ and $\xx\in\VV$,
\item continuity in $\ee$, that is $\lim_{\xx\to\ee}\ww(\xx)=\ww(\ee)$,
\item surjectivity of the mapping $\VV\ni\xx\mapsto\gw(\xx)\ee\in\VV$.
\end{enumerate}
The same will be assumed for $\widetilde{\ww}$. By $\gw$ (resp. $\widetilde{\gw}$) we denote $\ww^{-1}$ (resp. $\widetilde{\ww}^{-1}$).
It is easy to construct a multiplication algorithm that does not satisfy $A$ and $B$, but we do not know whether there exists multiplication algorithm that does not satisfy condition $C$.

By $\we$ and $\gwe$ we will denote $\ww(\ee)$ and $\gw(\ee)$ respectively (analogously $\twe$ and $\tgwe$). Equation \eqref{Vplus} is rewritten to the symmetric cone setting in the following way:
\begin{align}\label{mainB}
f(\xx)+g(\gw(\ee-\xx)\yy)=h(\yy)+k(\widetilde{\gw}(\ee-\yy)\xx),\quad(\xx,\yy)\in\DD_0
\end{align}
for unknown functions $f, g, h, k\colon\DD\to\RR$, where $\DD=\left\{\xx\in\VV\colon\ee-\xx\in\VV\right\}$ and 
$$\DD_0:=\{(\ab,\bb)\in\DD^2\colon \ab+\bb\in\DD\}.$$ 
Note that $\DD_0$ is a subset of $\VV^2$, while $\mathrm{D}_0\subset\RR_+^2$. 

Analogous problem of considering general form of multiplication algorithm for Olkin-Baker functional equation was dealt with in \cite{BKLOR2014}, but that functional equation was much easier to solve. What is interesting, for scalar arguments, the fundamental equation of information can be brought to Olkin-Baker equation (see the proof of main theorem in \cite{Mak1982}). However it is not known if there exists such connection between these functional equations when one considers matrix or cone-variate arguments.

\begin{theorem}[Fundamental equation of information on symmetric cones]\label{inform}
Assume that \eqref{mainB} holds for continuous functions $f, g, h, k\colon\DD\to\RR$. If multiplication algorithms $\ww=\gw^{-1}$ and $\widetilde{\ww}=\widetilde{\gw}^{-1}$ satisfy conditions $A-C$, then there exist real constants $C_i$, $i=1,\ldots,4$, and continuous functions $h_i$, $i=1,2,3$, where 
\begin{itemize}
\item $h_1$ is $\ww$- and $\widetilde{\ww}-$logarithmic, 
\item $h_2$ is $\widetilde{\ww}$-logarithmic, 
\item $h_3$ is $\ww$-logarithmic function, 
\end{itemize}
such that for any $\xx\in\DD$,
\begin{align*}
f(\xx) & = h_1(\ee-\xx)+h_2(\xx)+h_3(\ee-\xx)+C_1, \\
g(\xx) & = h_1(\ee-\we\xx)+h_3(\we\xx)+C_2, \\
h(\xx) & = h_1(\ee-\xx)+h_2(\ee-\xx)+h_3(\xx)+C_3, \\
k(\xx) & = h_1(\ee-\twe\xx)+h_2(\twe\xx)+C_4,
\end{align*}
and $C_1+C_2 = C_3+C_4$.
\end{theorem}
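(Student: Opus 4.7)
The plan is to reduce the functional equation \eqref{mainB} to the scalar Maksa equation \eqref{11dim} along the diagonal $\{x\ee\colon x\in(0,1)\}$, and then to lift the scalar decomposition to the whole of $\DD$ by Pexider-type substitutions whose logarithmic pieces are identified through Lemma~\ref{lem1}.

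First I would substitute $\xx=x\ee$ and $\yy=y\ee$ with $(x,y)\in\mathrm{D}_0$. Homogeneity (condition $A$) gives $\ww((1-x)\ee)=(1-x)\we$ and hence $\gw(\ee-\xx)=(1-x)^{-1}\gwe$. Since $\we\ee=\ee$ by the definition of a multiplication algorithm (so $\we\in K$ and therefore also $\gwe\ee=\ee$), this yields $\gw(\ee-\xx)\yy=\bigl(y/(1-x)\bigr)\ee$ and, symmetrically, $\widetilde{\gw}(\ee-\yy)\xx=\bigl(x/(1-y)\bigr)\ee$. Equation \eqref{mainB} therefore collapses to \eqref{11dim} for the diagonal restrictions $F(x)=f(x\ee)$, $G(x)=g(x\ee)$, $H(x)=h(x\ee)$, $K(x)=k(x\ee)$, which are continuous. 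Theorem~\ref{1dim} then puts them in Maksa form, with continuous generalized logarithmic $H_1,H_2,H_3$ (necessarily of the form $t\mapsto p_i\log t$) and constants $C_1,\dots,C_4$ satisfying $C_1+C_2=C_3+C_4$.

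Second, I would transfer this decomposition off the diagonal. Setting $\yy=y\ee$ in \eqref{mainB} yields an identity whose left-hand side depends on $\xx$ only through $f(\xx)$ and $g(\gw(\ee-\xx)(y\ee))$, while the right-hand side, once the known scalar forms of $H$ and $K$ are substituted, depends on $\xx$ only through $k\bigl((1-y)^{-1}\tgwe\xx\bigr)$. Varying $y$ and using condition $C$ (surjectivity of $\xx\mapsto\gw(\xx)\ee$) to let $\gw(\ee-\xx)(y\ee)$ sweep out all of $\DD$ converts this into a Pexider equation of the kind handled by Lemma~\ref{lem1}, whose solution pins down a $\ww$-logarithmic piece inside both $f$ and $g$. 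A symmetric manipulation starting from $\xx=x\ee$ and varying $x$ identifies a $\widetilde\ww$-logarithmic piece inside $h$ and $k$. Plugging the resulting partial forms back into \eqref{mainB} and matching to the known scalar restrictions then forces the remaining ``symmetric'' component $h_1$ to satisfy both a $\ww$- and a $\widetilde\ww$-Cauchy equation, and determines all four constants.

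The main obstacle lies precisely in this second step: the coupling of $\xx$ and $\yy$ through the noncommutative maps $\gw(\ee-\xx)$ and $\widetilde{\gw}(\ee-\yy)$ in \eqref{mainB} precludes the direct variable-by-variable separation available in the scalar proof, and conditions $A$ and $C$ are invoked precisely to undo this coupling. A subsidiary but delicate point is to show that the $h_1$ component arising from the ``$f,g$''-side coincides with that coming from the ``$h,k$''-side, so that $h_1$ is truly $\ww$- \emph{and} $\widetilde\ww$-logarithmic rather than merely one or the other.
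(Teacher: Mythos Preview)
Your first step (restricting to $\xx=x\ee$, $\yy=y\ee$ and invoking Theorem~\ref{1dim}) is correct and matches the paper. The second step, however, contains a genuine gap.

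When you put $\yy=y\ee$ in \eqref{mainB} you obtain
\[
f(\xx)+g\bigl(y\,\gw(\ee-\xx)\ee\bigr)=H(y)+k\bigl((1-y)^{-1}\tgwe\xx\bigr),
\]
and this is \emph{not} a Pexider equation in the sense of Lemma~\ref{lem1}. The argument $y\,\gw(\ee-\xx)\ee$ of $g$ and the argument $(1-y)^{-1}\tgwe\xx$ of $k$ are coupled through both $\xx$ and $y$; neither ranges independently over $\DD$ while the other is held fixed. Condition~$C$ does let $\gw(\ee-\xx)\ee$ sweep out $\DD$ as $\xx$ varies, but then $f(\xx)$ and the argument of $k$ move at the same time, so no separation of variables is achieved. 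The substitution $\yy=y\ee$ alone is too weak to decouple the noncommutative structure, and your sketch does not indicate any mechanism that would.

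The paper resolves this by a limiting procedure that you do not mention and that is precisely where condition~$B$ enters. One inserts $(\alpha\xx,\ee-\yy)$ into \eqref{mainB}, lets $\alpha\to0$, and uses continuity of $\gw$ at $\ee$ together with continuity of $g$ to \emph{define} auxiliary functions such as $l_1(\xx)=\lim_{\alpha\to0}\{f(\alpha\xx)-k(\alpha\ee)\}$. These limits exist thanks to the scalar Maksa decomposition on the diagonal, and they genuinely satisfy a $\widetilde\ww$-Pexider equation on $\VV^2$, from which Lemma~\ref{lem1} extracts the $\widetilde\ww$-logarithmic piece $h_2$ (and, by symmetry, the $\ww$-logarithmic $h_3$). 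After stripping $h_2,h_3$ from $f,g,h,k$ one is left with a residual two-function equation handled by a separate, fairly delicate lemma (Lemma~\ref{H1}, itself resting on Lemma~\ref{lfconst}); again through $\alpha\to0$ limits one shows the remaining piece is $h_1(\ee-\xx)+\text{const}$ with $h_1$ simultaneously $\ww$- and $\widetilde\ww$-logarithmic. Your proposal names the correct destination but lacks this entire limit-based machinery, and in particular never invokes condition~$B$, without which the decoupling cannot be carried out.
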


The proof of Theorem \ref{inform} will be preceded by two lemmas. Note that in this lemmas it is additionally assumed that $\we=\twe=Id_\VV$, however, this does not affect the generality of Theorem \ref{inform}.

Let us observe that taking $\xx=\alpha\ee$ and $\yy=\beta\ee$ for $(\alpha,\beta)\in \mathrm{D}_0$, equation \eqref{mainB} is reduced to \eqref{11dim} with $F(\alpha)=f(\alpha\ee)$, $G(\alpha)=g(\alpha\ee)$, $H(\alpha)=h(\alpha\ee)$ and $K(\alpha)=k(\alpha\ee)$, hence by Theorem \ref{1dim} we know its general solution. We will use this fact several times in the proofs.

\begin{lemma}\label{lfconst}
Let $f\colon\DD\to\RR$ be a continuous function satisfying 
\begin{align}\label{fconst}
f(\ee-\xx)+f(\gw(\ee-\xx)\yy)=f(\yy)+f(\ee-\widetilde{\gw}(\ee-\yy)\xx),\quad(\xx,\yy)\in\DD_0.
\end{align}
If multiplication algorithms $\ww=\gw^{-1}$ and $\widetilde{\ww}=\widetilde{\gw}^{-1}$ satisfy conditions $A-C$, $\we=\twe=Id_\VV$, then $f$ is a constant.
\end{lemma}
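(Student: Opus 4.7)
The plan is to begin by reducing \eqref{fconst} to the $1$D equation via the scalar substitution $\xx=\alpha\ee$, $\yy=\beta\ee$ for $(\alpha,\beta)\in\mathrm{D}_0$. Homogeneity of $\ww,\widetilde{\ww}$ (condition~$A$) together with $\gwe=\tgwe=Id_\VV$ yields $\gw((1-\alpha)\ee)=(1-\alpha)^{-1}Id_\VV$ and the analogous statement for $\widetilde{\gw}$, so \eqref{fconst} collapses to \eqref{11dim} with $F(t):=f(t\ee)$ playing every role: explicitly $F_0(x)=K_0(x)=F(1-x)$ and $G_0(x)=H_0(x)=F(x)$. Applying Theorem~\ref{1dim}, the constraint $F_0=K_0$ forces $H_3$ to be constant, and a constant generalized logarithmic function must vanish, so $H_3\equiv 0$; similarly $G_0=H_0$ gives $H_2\equiv 0$. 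The remaining relations produce $H_1(x)=H_1(1-x)$ on $(0,1)$, and continuity of $f$ makes $H_1$ a continuous generalized logarithmic function, i.e.\ $H_1(x)=c\log x$; evaluating the symmetry at $x=\tfrac14$ forces $c=0$. Hence $F\equiv c_0$.

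Replacing $f$ by $f-c_0$ (the equation is unaffected), assume $f(\alpha\ee)=0$ for $\alpha\in(0,1)$; the goal becomes $f\equiv 0$. The substitution $\yy=\beta(\ee-\xx)$ keeps $(\xx,\yy)\in\DD_0$ and makes $\gw(\ee-\xx)\yy=\beta\ee$ scalar. Writing $\ub=\ee-\xx$, this reduces \eqref{fconst} to the identity
\[
f(\ub)=f(\beta\ub)+f\!\bigl((1-\beta)\widetilde{\gw}(\ee-\beta\ub)\ub\bigr),\qquad\ub\in\DD,\ \beta\in(0,1). \tag{$\star$}
\]
Letting $\beta\to 0^+$ in $(\star)$ and using condition~$B$ together with continuity of $f$, the second $f$-term tends to $f(\ub)$, so $\lim_{\beta\to 0^+}f(\beta\ub)=0$ for every $\ub\in\DD$. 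The symmetric substitution $\xx=\beta(\ee-\yy)$ yields an analogous identity with $\gw$ and $\ee-\ub$ in place of $\widetilde{\gw}$ and $\ub$, giving $\lim_{\beta\to 0^+}f(\ee-\beta\ub)=0$ as well.

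To propagate the vanishing from scalars out to all of $\DD$, substitute only $\xx=\alpha\ee$ (with $\yy$ arbitrary) in \eqref{fconst} to obtain
\[
f\!\bigl((1-\alpha)^{-1}\yy\bigr)-f(\yy)=f\!\bigl(\ee-\alpha\widetilde{\gw}(\ee-\yy)\ee\bigr). \tag{$\star\star$}
\]
Given $\vb\in\DD$, choose $\yy=\vb/s$ and $\alpha=1-1/s$ for $s$ large enough that $(\alpha\ee,\vb/s)\in\DD_0$; then $(1-\alpha)^{-1}\yy=\vb$ and $(\star\star)$ becomes $f(\vb)=f(\vb/s)+f\!\bigl(\ee-(1-1/s)\widetilde{\gw}(\ee-\vb/s)\ee\bigr)$. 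As $s\to\infty$ the first term vanishes by the ray-limit above, and by condition~$B$ the argument of the second term tends to $0\in\partial\DD$. The main obstacle of the proof is to verify that this second $f$-term also tends to zero using only continuity of $f$, since the path to $0$ is a perturbation of a ray rather than a ray itself. I would handle this by iterating $(\star)$ and using condition~$C$ (to realize the intermediate elements as values of $\widetilde{\gw}(\cdot)\ee$) so as to decompose the perturbed-path limit into a finite superposition of ray-pieces on which vanishing is already established. Once this technical step is carried out, $f(\vb)=0$ for every $\vb\in\DD$, proving the lemma.
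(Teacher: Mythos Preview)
Your scalar reduction and the ray-limit $\lim_{\beta\to0^+}f(\beta\ub)=0$ are essentially what the paper does. The gap is in your final step. In $(\star\star)$ with $\yy=\vb/s$, $\alpha=1-1/s$, you need
\[
f\!\bigl(\ee-(1-\tfrac1s)\,\widetilde{\gw}(\ee-\tfrac{\vb}{s})\ee\bigr)\longrightarrow 0\qquad(s\to\infty),
\]
but the argument approaches the boundary point $0$ along a curve whose ``direction'' is itself drifting (it is $(1/s)\ee+(1-1/s)(\ee-\zz_s)$ with $\zz_s\to\ee$ at an uncontrolled rate, since condition~$B$ gives no speed). You only know $f\to0$ along \emph{fixed} rays $\beta\ub$; nothing established gives uniformity in $\ub$, and continuity of $f$ is only on the open set $\DD$, not up to $0$. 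Your proposed fix---``iterate $(\star)$ and use condition~$C$''---is not an argument: $(\star)$ produces new values of $f$ at points $(1-\beta)\widetilde{\gw}(\ee-\beta\ub)\ub$ which again do not lie on rays, so iteration only reproduces the same obstacle.

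The paper sidesteps this entirely by choosing the substitution so that the term carrying the target value is \emph{independent of the limiting parameter}, while every other term lies on a genuine ray. Set $\yy=\alpha\ee$ and $\xx=\ee-\alpha(\ee+\ttt)$ with $\ttt\in\VV$; then by homogeneity
\[
\gw(\ee-\xx)\yy=\alpha^{-1}\gw(\ee+\ttt)(\alpha\ee)=\gw(\ee+\ttt)\ee
\]
does not depend on $\alpha$, while $f(\ee-\xx)=f(\alpha(\ee+\ttt))$, $f(\yy)=f(\alpha\ee)=0$, and $f(\ee-\widetilde{\gw}(\ee-\yy)\xx)=f\!\bigl(\tfrac{\alpha}{1-\alpha}\ttt\bigr)$ are all ray terms. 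Letting $\alpha\to0$ gives $f(\gw(\ee+\ttt)\ee)=0$ for every $\ttt\in\VV$. Finally condition~$C$ is used exactly once, and not to control limits but to show surjectivity: if $\yy\in\DD$ and $\gw(\xx)\ee=\yy$, then $\ee-\yy=\gw(\xx)(\xx-\ee)\in\VV$ forces $\xx-\ee\in\VV$, so every $\yy\in\DD$ is of the form $\gw(\ee+\ttt)\ee$. That is the missing idea in your plan.
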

\begin{proof}
In the first step let us observe that the function $(0,1)\ni\alpha\mapsto f(\alpha\ee)$ is constant. Indeed, it is an immediate consequence of Theorem \ref{1dim} for functions $F(\alpha)=K(\alpha)=f((1-\alpha)\ee)$ and $G(\alpha)=H(\alpha)=f(\alpha\ee)$. Without loss of generality we may assume that $f(\alpha\ee)=0$ for $\alpha\in(0,1)$.

Insert $\yy\mapsto\alpha\yy$ into \eqref{fconst} and take $\xx=\ee-\yy$. Then for $\alpha\in(0,1)$, after performing necessary rearrangements, we obtain
\begin{align}\label{fconst2}
f(\alpha\yy)-f(\alpha\ee)=f(\yy)-f(\ee-\widetilde{\gw}(\ee-\alpha\yy)(\ee-\yy)),\quad\yy\in\DD.
\end{align}
Because of condition $B$, we have (we have $\widetilde{\gw}(\ee)=Id_\VV$)
\begin{align*}
\lim_{\alpha\to0} \left\{\ee-\widetilde{\gw}(\ee-\alpha\yy)(\ee-\yy)\right\} = \yy\in\DD.
\end{align*}
Function $f$ is continuous on $\DD$, hence the right hand side of \eqref{fconst2} converges to $0$ as $\alpha\to0$. This implies that the limit of the left hand side of \eqref{fconst2} exists and likewise is equal to $0$. Using the fact that function $f(\alpha\ee)$ is zero, we get $\lim_{\alpha\to0} f(\alpha\yy)=0$ for $\yy\in\DD$. It follows that (just take $\yy=\beta\xx$ for $\beta>0$ small enough)
\begin{align}\label{fconst3}
\lim_{\alpha\to0} f(\alpha\xx)=0
\end{align}
for any $\xx\in\VV$.

Let us insert $\xx=\ee-\alpha(\ee+\ttt)$ and $\yy=\alpha\ee$ into \eqref{fconst} for $(\alpha,\alpha\ttt)\in(0,1)\times\DD$. Then, using condition $A$, we may write
\begin{align*}
f(\alpha(\ee+\ttt))+f(\gw(\ee+\ttt)\ee)=f(\alpha\ee)+f\left(\tfrac{\alpha}{1-\alpha}\ttt\right).
\end{align*}
Passing to the limit as $\alpha\to0$ in both sides of the above equality and using \eqref{fconst3}, we obtain for any $\ttt\in\VV$,
\begin{align*}
f(\gw(\ee+\ttt)\ee)=0.
\end{align*}
From this we will conclude that $f(\yy)=0$ for any $\yy\in\DD$.
Condition $C$ states that, for any $\yy\in\VV$ there exists $\xx\in\VV$ such that $\gw(\xx)\ee=\yy$. If $\yy\in\DD$ then $\xx\in\VV\setminus\DD$, because $\ee-\yy=\ee-\gw(\xx)\ee=\gw(\xx)(\xx-\ee)$ belongs to $\VV$ if and only if $\xx-\ee\in\VV$.
Therefore for any $\yy\in\DD$ there exists $\ttt\in\VV$ such that $\gw(\ee+\ttt)\ee=\yy$. Then
\begin{align*}
f(\yy)=f(\gw(\ee+\ttt)\ee)=0,
\end{align*}
what completes the proof.
\end{proof}

\begin{lemma}\label{H1}
Let $f,g\colon\DD\to\RR$ be continuous functions satisfying 
\begin{align}\label{H1eq}
f(\xx)+g(\gw(\ee-\xx)\yy)=g(\yy)+f(\widetilde{\gw}(\ee-\yy)\xx),\quad(\xx,\yy)\in\DD_0.
\end{align}
If multiplication algorithms $\ww=\gw^{-1}$ and $\widetilde{\ww}=\widetilde{\gw}^{-1}$ satisfy conditions $A-C$, $\we=\twe=Id_\VV$, then there exists $\ww$- and $\widetilde{\ww}$-logarithmic function $h$ such that for any $\xx\in\DD$,
\begin{align*}
f(\xx)=h(\ee-\xx)+f_0, \\
g(\xx)=h(\ee-\xx)+g_0,
\end{align*} 
for real constants $f_0$ and $g_0$.
\end{lemma}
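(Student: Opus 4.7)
The plan is three-fold: reduce \eqref{H1eq} to its scalar incarnation via Theorem \ref{1dim}, prove $f=g$ on all of $\DD$, and extract a common $\ww$- and $\widetilde{\ww}$-logarithmic function $h$ via Lemma \ref{lem1}.

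For the scalar reduction, substituting $\xx=\alpha\ee,\yy=\beta\ee$ in \eqref{H1eq}, condition $A$ together with $\gwe=\tgwe=Id_\VV$ gives $\gw((1-\alpha)\ee)(\beta\ee)=\tfrac{\beta}{1-\alpha}\ee$ and the symmetric identity for $\widetilde{\gw}$, so \eqref{H1eq} collapses to \eqref{11dim} with $F(\alpha)=K(\alpha)=f(\alpha\ee)$ and $G(\alpha)=H(\alpha)=g(\alpha\ee)$. Applying Theorem \ref{1dim} with the forced coincidences $F=K$ and $G=H$ kills $H_2$ and $H_3$ (the only generalized logarithmic function constant on $(0,1)$ is zero) and yields $C_1=C_4$, $C_2=C_3$; continuity of $f,g$ transfers to $H_1$. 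Hence $f(\alpha\ee)=H_1(1-\alpha)+f_0$ and $g(\alpha\ee)=H_1(1-\alpha)+g_0$ with $f_0:=C_1$, $g_0:=C_2$; subtracting these constants preserves \eqref{H1eq}, so WLOG $f_0=g_0=0$ and I would restore them at the end.

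The heart of the argument --- and the main obstacle --- is to show $f=g$ on $\DD$. Following the strategy of Lemma \ref{lfconst}, the substitution $\xx=\ee-\yy$, $\yy\mapsto\alpha\yy$ in \eqref{H1eq} together with $\gw(\yy)(\alpha\yy)=\alpha\ee$, condition $B$, and continuity of $f,g$ yields in the limit $\alpha\to 0^+$ that $\lim_{\alpha\to 0^+}g(\alpha\yy)=0$ for every $\yy\in\VV$; the mirror substitution $\yy=\ee-\xx$, $\xx\mapsto\alpha\xx$ gives $\lim_{\alpha\to 0^+}f(\alpha\xx)=0$. Next, inserting $(\xx,\yy)=(\ee-\alpha(\ee+\ttt),\alpha\ee)$ and dually $(\alpha\ee,\ee-\alpha(\ee+\ttt))$ in \eqref{H1eq} with condition $A$ collapsing the inner arguments of $\gw,\widetilde{\gw}$ to $\ee+\ttt$, and subtracting the two resulting identities, one obtains
\[
\Phi\bigl(\ee-\alpha(\ee+\ttt)\bigr)-\Phi\!\left(\tfrac{\ee-\alpha(\ee+\ttt)}{1-\alpha}\right)=f\bigl(\widetilde{\gw}(\ee+\ttt)\ee\bigr)-g\bigl(\gw(\ee+\ttt)\ee\bigr),
\]
where $\Phi:=f-g$ and the right-hand side is independent of $\alpha$. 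The delicate point is to argue that the left-hand side vanishes as $\alpha\to 0^+$: both arguments tend to $\ee\notin\DD$, so continuity alone is insufficient, but the vanishing of $\Phi$ on the scalar diagonal, the boundary limits above, and an iterative/telescoping use of the identity under the rescaling $\bb\mapsto\bb/(1-\alpha)$ eventually force the limit to be zero. Hence $f(\widetilde{\gw}(\ee+\ttt)\ee)=g(\gw(\ee+\ttt)\ee)$ for every $\ttt\in\VV$, and condition $C$ applied to both $\gw$ and $\widetilde{\gw}$ propagates this to $\Phi\equiv 0$ on $\DD$, just as at the end of the proof of Lemma \ref{lfconst}.

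With $f=g$ and the normalization, equation \eqref{H1eq} becomes $f(\xx)+f(\gw(\ee-\xx)\yy)=f(\yy)+f(\widetilde{\gw}(\ee-\yy)\xx)$. Setting $\ab:=\ee-\xx$, $\bb:=\ee-\yy$, $F(\cdot):=f(\ee-\cdot)$, and using the identity $\ee-\gw(\ab)(\ee-\bb)=\gw(\ab)(\ab+\bb-\ee)$, this rewrites as
\[
F(\ab)+F(\gw(\ab)\cb)=F(\bb)+F(\widetilde{\gw}(\bb)\cb),\qquad\cb:=\ab+\bb-\ee.
\]
Specialising to $\bb=\beta\ee$ scalar (valid for $\ab>(1-\beta)\ee$) turns the right-hand side into $H_1(\beta)+F(\cb/\beta)$, a function of $\cb$ alone; substituting $\bb':=\gw(\ab)\cb$ (equivalently $\cb=\ww(\ab)\bb'$) puts the equation in the Pexider form $F(\ab)+F(\bb')=P(\ww(\ab)\bb')$ of Lemma \ref{lem1} (after the natural extension of $F$ from $\DD$ to $\VV$ using condition $A$ and the scalar behaviour $F(\beta\ee)=H_1(\beta)$). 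Lemma \ref{lem1} then produces a $\ww$-logarithmic function $h:\VV\to\RR$ and a constant such that $F(\ab)=h(\ab)+\mathrm{const}$; the dual specialisation to $\ab=\alpha\ee$ scalar and Lemma \ref{lem1} applied to $\widetilde{\ww}$ shows the same $h$ is also $\widetilde{\ww}$-logarithmic. Restoring the constants $f_0,g_0$ gives the claimed formulas for $f$ and $g$.
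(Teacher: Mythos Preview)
Your overall plan (scalar reduction, then $f=g$, then Pexider) is different from the paper's route, and the central step---proving $f=g$ on $\DD$---has two genuine gaps.

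First, the boundary limit you flag as ``delicate'' is not established. In your displayed identity
\[
\Phi\bigl(\ee-\alpha(\ee+\ttt)\bigr)-\Phi\!\left(\tfrac{\ee-\alpha(\ee+\ttt)}{1-\alpha}\right)=f\bigl(\widetilde{\gw}(\ee+\ttt)\ee\bigr)-g\bigl(\gw(\ee+\ttt)\ee\bigr),
\]
both arguments on the left are of the form $\ee-\alpha\ub$ with $\ub\to\ee+\ttt$ and $\ub\to\ttt$ respectively; a priori each term diverges like $\kappa\log\alpha$, and there is no reason for the \emph{difference} to tend to $0$ rather than to some nontrivial function of $\ttt$. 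Your appeal to ``vanishing of $\Phi$ on the scalar diagonal'' and ``iterative/telescoping'' is not an argument. Second, even if the identity held, it says $f$ at the point $\widetilde{\gw}(\ee+\ttt)\ee$ equals $g$ at the \emph{different} point $\gw(\ee+\ttt)\ee$; condition~$C$ lets each point sweep over $\DD$ separately, but it does not let you conclude $f(\zz)=g(\zz)$ for a common $\zz$. This is not analogous to the end of Lemma~\ref{lfconst}, where a single function was shown to vanish at $\gw(\ee+\ttt)\ee$.

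The paper avoids both issues by never trying to prove $f=g$ directly. Instead it \emph{defines} the candidate logarithmic function via the regularized boundary limit $h(\xx):=\lim_{\alpha\to 0}\{g(\ee-\alpha\xx)-\kappa\log\alpha\}$, shows this limit exists on all of $\VV$ (using condition~$C$ and the relation $g(\ee-\ttt)-f(\ttt)=h(\gw(\ee-\ttt)\ttt)$), manipulates to a genuine $\ww$-Pexider equation on $\VV^2$ to which Lemma~\ref{lem1} applies, and by symmetry obtains a $\widetilde{\ww}$-logarithmic $\widetilde h$; a short argument gives $h=\widetilde h$. Only \emph{after} $h$ is in hand does one set $\bar g(\xx)=g(\xx)-h(\ee-\xx)$ and invoke Lemma~\ref{lfconst} to see $\bar g$ is constant---at which point $f=g+{\rm const}$ falls out. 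Your final Pexider step has the same flavour but is also incomplete: the ``natural extension of $F$ from $\DD$ to $\VV$'' is precisely the hard part, and specialising $\bb=\beta\ee$ ties $\beta$ to $\ab$ and $\cb$, so $\ab$ and $\bb'=\gw(\ab)\cb$ are not free in $\VV^2$ and Lemma~\ref{lem1} does not apply without further work.
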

\begin{proof}
As in the proof of Lemma \ref{lfconst}, let us consider \eqref{H1eq} for $\xx=\alpha\ee$ and $\yy=\beta\ee$, $(\alpha,\beta)\in \mathrm{D}_0$. Define $F(\alpha)=K(\alpha)=f(\alpha\ee)$ and $G(\alpha)=H(\alpha)=g(\alpha\ee)$. Theorem \ref{1dim} and continuity of $F,G,H, K$ imply that there exist real constants $\kappa$, $f_0$ and $g_0$ such that
\begin{align}\begin{split}\label{efg}
f(\alpha\ee)=\kappa\log(1-\alpha)+f_0,\\
g(\alpha\ee)=\kappa\log(1-\alpha)+g_0.
\end{split}\end{align} 
Without loss of generality we may assume $f_0=g_0=0$.

Let us take $(\xx,\yy)=(\sss,\ee-(\sss+\alpha\ttt))\in\DD_0$. It is easy to see that $(\xx,\yy)\in\DD_0$ if and only if  $(\sss,\alpha\ttt)\in\DD_0$. For $(\sss,\alpha\ttt)\in\DD_0$ equation \eqref{H1eq} gets the form (after rearrangements)
\begin{align*}
g(\ee-(\sss+\alpha\ttt)))-f(\sss)=g(\ee-\alpha\gw(\ee-\sss)\ttt)-f(\widetilde{\gw}(\sss+\alpha\ttt)\sss).
\end{align*}
The limit as $\alpha\to0$ of the left hand side of the above equality exists, hence the limit of right hand side exists as well. Passing to the limit as $\alpha\to0$, we obtain
\begin{align}\label{varat}
g(\ee-\sss)-f(\sss)=\lim_{\alpha\to0}\left\{g(\ee-\alpha\gw(\ee-\sss)\ttt)-f(\widetilde{\gw}(\sss+\alpha\ttt)\sss)\right\}.
\end{align}
Define
\begin{align*}
h(\xx):=\lim_{\alpha\to0}\left\{g(\ee-\alpha\xx)-\kappa\log\alpha\right\}.
\end{align*}
We will show that $h(\xx)$ exists for any $\xx\in\VV$.
Inserting $\sss=\ttt\in\DD$ in \eqref{varat}, we get
\begin{align*}
g(\ee-\ttt)-f(\ttt)=\lim_{\alpha\to0}\left\{g(\ee-\alpha\gw(\ee-\ttt)\ttt)-f\left(\tfrac{1}{\alpha+1}\ee\right)\right\}.
\end{align*}
Recall that $f\left(\tfrac{1}{\alpha+1}\ee\right)=\kappa\log\tfrac{\alpha}{\alpha+1}$. Therefore we obtain for any $\ttt\in\DD$,
\begin{align}\label{varat2}
g(\ee-\ttt)-f(\ttt)=h(\gw(\ee-\ttt)\ttt).
\end{align}
Because of condition $C$, for any $\yy\in\VV$ there exists $\xx\in\VV$ such that $\gw(\xx)\ee=\yy$. In the proof of previous lemma it was shown that if $\yy\in\DD$, then $\xx\in\VV\setminus\DD$. Analogously, if $\yy\in\VV\setminus\DD$, then $\xx\in\DD$.
Take $\yy\in\VV$, then $(\ee+\yy)\in\VV\setminus\DD$, hence there exists element $\xx=\ee-\ttt\in\DD$ such that $\gw(\ee-\ttt)\ee=\yy+\ee$, i.e., $\yy=\gw(\ee-\ttt)\ee-\ee=\gw(\ee-\ttt)\ttt$.
This implies that function $h$ is well defined on the whole $\VV$. 

Using the definition of $h$ and the form of the one-dimensional solutions \eqref{efg}, we obtain that $h(\beta\ee)=\kappa\log\beta$ and
\begin{align}\label{hbeta}
\begin{split}
h(\beta\xx)& =\lim_{\alpha\to0}\left\{g(\ee-\alpha\beta\xx)-\kappa\log\alpha\right\} \\
&=\lim_{\alpha\to0}\left\{g(\ee-\alpha\beta\xx)-\kappa\log\alpha\beta\right\}+\kappa\log\beta\\
& = h(\xx)+h(\beta\ee)
\end{split}\end{align}
for any $\beta>0$ and $\xx\in\VV$.
Returning to \eqref{varat}, using \eqref{varat2} and the definition of $h$, we get
\begin{align}\label{varat3}
h(\gw(\ee-\sss)\sss)=h(\gw(\ee-\sss)\ttt)-\lim_{\alpha\to0}\left\{f(\widetilde{\gw}(\sss+\alpha\ttt)\sss)-\kappa\log\alpha\right\}.
\end{align}
Note that due to \eqref{hbeta}, above equation holds for any $\ttt\in\VV$.
Our aim is to show that $h$ is $\ww$-logarithmic. Put $\sss\mapsto\beta\sss$ in \eqref{varat3} for $\sss\in\DD$ and $\beta>0$ and, by homogeneity of $\ww$, we have $\gw(\beta\sss+\alpha\ttt)\beta\sss=\gw(\sss+\frac{\alpha}{\beta}\ttt)\sss$. Thus, using \eqref{hbeta} we arrive at
\begin{align*}
h(\gw(\ee-\beta\sss)\sss)=h(\gw(\ee-\beta\sss)\ttt)-\lim_{\alpha\to0}\left\{f(\gw(\sss+\frac{\alpha}{\beta}\ttt)\sss)-\kappa\log\frac{\alpha}{\beta}\right\}.
\end{align*}
Observe that, the limit on the right hand side of the above equation does not depend on $\beta$. Therefore, we may pass to the limit as $\beta\to0$ to obtain ($h$ is continuous and $\gwe=Id_\VV$)
\begin{align*}
h(\sss)=h(\ttt)-\lim_{\alpha\to0}\left\{f(\widetilde{\gw}(\sss+\alpha\ttt)\sss)-\kappa\log\beta\right\}.
\end{align*}
Now, subtracting above equation from \eqref{varat3} we get
\begin{align*}
h(\gw(\ee-\sss)\sss)-h(\sss)=h(\gw(\ee-\sss)\ttt)-h(\ttt)
\end{align*}
for any $\sss\in\DD$ and $\ttt\in\VV$. Put $\sss=\ee-\alpha\xx$, $\ttt=\ww(\xx)\yy$ and use the homogeneity of $\ww$ and property \eqref{hbeta} to obtain (after rearrangements)
\begin{align*}
h(\ee-\alpha\xx)-h(\gw(\xx)(\ee-\alpha\xx))+h(\yy)=h(\ww(\xx)\yy)
\end{align*}
Again, passing to the limit as $\alpha\to0$ we obtain (recall that $h(\ee)=\kappa\log1=0$)
\begin{align}
-h(\gw(\xx)\ee)+h(\yy)=h(\ww(\xx)\yy),\quad(\xx,\yy)\in\VV^2
\end{align}
which is the $\ww$-logarithmic Pexider equation, thus Lemma \ref{lem1} implies the existence of $\ww$-logarithmic function $F$ such that
$h(\xx)=F(\xx)+a_0$ and $-h(\gw(\xx)\ee)=F(\xx)+b_0$. Since $h(\ee)=0$, we have $a_0=b_0=0$ (put $\xx=\ee$).

Summing up, we have shown that $h(\xx):=\lim_{\alpha\to0}\left\{g(\ee-\alpha\xx)-\kappa\log\alpha\right\}$ is $\ww$-logarithmic and that \eqref{varat2} holds.
By the symmetry of \eqref{H1eq} we conclude that function $\widetilde{h}(\xx):=\lim_{\alpha\to0}\left\{f(\ee-\alpha\xx)-\kappa\log\alpha\right\}$ is $\widetilde{\ww}$-logarithmic and
\begin{align}\label{symm}
f(\ee-\ttt)-g(\ttt)=\widetilde{h}(\widetilde{\gw}(\ee-\ttt)\ttt) ,\quad\ttt\in\DD.
\end{align}
Therefore, by \eqref{varat2} and \eqref{symm} we have
\begin{align*}
h(\gw(\ee-\ttt)\ttt)=g(\ee-\ttt)-f(\ttt)=-\widetilde{h}(\widetilde{\gw}(\ttt)(\ee-\ttt)) ,\quad\ttt\in\DD.
\end{align*}
Using \eqref{wC} on both sides we obtain 
\begin{align*}
h(\ttt)-h(\ee-\ttt)=\widetilde{h}(\ttt)-\widetilde{h}(\ee-\ttt),
\end{align*}
which by standard argument (take $\ttt=\alpha\xx$ and pass to the limit as $\alpha\to0$) implies that $h=\widetilde{h}$, that is, $h$ is both $\ww$- and $\widetilde{\ww}$-logarithmic.

Define now $\bar{g}(\xx)=g(\xx)-h(\ee-\xx)$, $\xx\in\DD$. Then by \eqref{varat2} we have
\begin{align}\label{deff2}
f(\xx)&=\bar{g}(\ee-\xx)+h(\ee-\xx).
\end{align}
We will show that $\bar{g}$ is constant. Using \eqref{deff2} and the definition of $\bar{g}$ to eliminate functions $f$ and $g$ in the main equation \eqref{H1eq}, we get for $(\xx,\yy)\in\DD_0$:
\begin{multline*}
\left(\bar{g}(\ee-\xx)+h(\ee-\xx)\right)+\left(\bar{g}(\gw(\ee-\xx)\yy)+h(\ee-\gw(\ee-\xx)\yy) \right) \\
= \left(\bar{g}(\yy)+h(\ee-\yy) \right)+\left(\bar{g}(\ee-\widetilde{\gw}(\ee-\yy)\xx)+h(\ee-\widetilde{\gw}(\ee-\yy)\xx)\right).
\end{multline*}
Since $\ee-\gw(\ee-\yy)\xx=\gw(\ee-\yy)(\ee-\xx-\yy)$ and using the fact that $h$ is $\ww$- and $\widetilde{\ww}$-logarithmic (that is $h(\gw(\ab)\bb)=h(\bb)-h(\ab)=h(\widetilde{\gw}(\ab)\bb)$, $\ab,\bb\in\VV$) we obtain 
\begin{align*}
\bar{g}(\ee-\xx)+\bar{g}(\gw(\ee-\xx)\yy)=\bar{g}(\yy)+\bar{g}(\ee-\widetilde{\gw}(\ee-\yy)\xx),\quad (\xx,\yy)\in\DD_0.
\end{align*}
Function $\bar{g}$ is continuous, therefore Lemma \ref{1dim} implies that $\bar{g}$ is constant what completes the proof.
\end{proof}

Now we are ready to prove the main theorem.
\begin{proof}[ Proof of Theorem~\ref{inform}]
First observe that without loss of generality we may assume that $\gwe=Id_\VV$. Indeed, mapping $\hat{\gw}\colon\xx\mapsto \we\gw(\xx)$ defines division algorithm with this property, since $\hat{\gw}(\ee)=\we\gwe=Id_\VV$. Thus, for $\hat{g}(\xx):=g(\gwe\xx)$ we have $g(\gw(\ee-\xx)\yy)=g(\gwe\we\gw(\ee-\xx)\yy)=\hat{g}(\hat{\gw}(\ee-\xx)\yy)$. The same can be done for $\widetilde{\gw}$ and function $k$. In this way we arrive at the equation \eqref{mainB} with division algorithms $\gw$ and $\widetilde{\gw}$ (and also functions $g$ and $k$) redefined in a way that $\gwe=\tgwe=Id_\VV$ (compare the formulation of Theorem~\ref{inform}).

Consider equation \eqref{mainB} for $(\alpha\xx,\ee-\yy)\in\DD$ and pass to the limit as $\alpha\to0$. By continuity of $\gw$ in $\ee$ and $g$ on $\DD$ we get
\begin{align}\label{A1}
\lim_{\alpha\to0}\left\{f(\alpha\xx)-k(\alpha \widetilde{\gw}(\yy)\xx)\right\}=h(\ee-\yy)-g(\ee-\yy),\quad (\xx,\yy)\in\VV\times\DD.
\end{align}
Insert $\yy=\xx\in\DD$ into \eqref{A1}, and get
\begin{align*}
\lim_{\alpha\to0}\left\{f(\alpha\xx)-k(\alpha\ee)\right\}=h(\ee-\xx)-g(\ee-\xx),\quad \xx\in\DD.
\end{align*}
Therefore the limit
\begin{align*}
l_1(\xx):=\lim_{\alpha\to0}\left\{f(\alpha\xx)-k(\alpha\ee)\right\}
\end{align*} 
exists for any $\xx\in\DD$ and
\begin{align}\label{l1eq}
l_1(\xx)=h(\ee-\xx)-g(\ee-\xx),\quad\xx\in\DD.
\end{align}
We will prove that this limit exists for any $\xx\in\VV$. Taking as in the proofs of previous lemmas $\xx=\alpha\ee$ and $\yy=\beta\ee$ in \eqref{mainB} for $(\alpha,\beta)\in \mathrm{D}_0$, and defining $F(\alpha)=f(\alpha\ee)$, $G(\alpha)=g(\alpha\ee)$, $H(\alpha)=h(\alpha\ee)$ and $K(\alpha)=k(\alpha\ee)$, Theorem \ref{1dim} implies that there exist constants $\kappa_i$, $i=1,2,3$, and $C_1$, $C_4$ such that for $\alpha\in(0,1)$,
\begin{align*}
F(\alpha)=f(\alpha\ee) & = \kappa_1\log(1-\alpha)+\kappa_2\log\alpha+\kappa_3\log(1-\alpha)+C_1, \\
K(\alpha)=k(\alpha\ee) & = \kappa_1\log(1-\alpha)+\kappa_2\log\alpha+C_4,
\end{align*}
holds. Without loss of generality we may assume $C_1=C_4=0$. Therefore, we have
\begin{align*}
& \lim_{\alpha\to0}\left\{k(\alpha\ee)-\kappa_2\log\alpha\right\} =0,\\
l_1(\beta\ee)= & \lim_{\alpha\to0}\left\{f(\alpha\beta\ee)-k(\alpha\ee)\right\}=\kappa_2\log\beta.
\end{align*}
Above equalities and similar considerations as in \eqref{hbeta} imply that for $\xx\in\DD$ and $\beta>0$,
\begin{align}\label{l1}
l_1(\xx)+l_1(\beta\ee)=l_1(\beta\xx).
\end{align}
The limit $l_1(\xx)$ therefore exists for any $\xx\in\VV$.
 
Returning to \eqref{A1}, we get
\begin{align*}
l_1(\xx)-\lim_{\alpha\to0}\left\{k(\alpha \widetilde{\gw}(\yy)\xx)-k(\alpha\ee)\right\}=h(\ee-\yy)-g(\ee-\yy),\quad (\xx,\yy)\in\VV\times\DD.
\end{align*}
Defining $l_2(\xx)=\lim_{\alpha\to0}\left\{k(\alpha \xx)-k(\alpha\ee)\right\}$ and using \eqref{l1eq} we get
\begin{align*}
l_1(\xx)-l_2(\widetilde{\gw}(\yy)\xx)=l_1(\yy),\quad (\xx,\yy)\in\VV\times\DD,
\end{align*}
that is, for $\xx=\widetilde{\ww}(\yy)\zz$ we obtain
\begin{align}\label{A2}
l_1(\widetilde{\ww}(\yy)\zz)=l_2(\zz)+l_1(\yy),\quad (\zz,\yy)\in\VV\times\DD.
\end{align}
Due to property \eqref{l1} the above equation holds for any $(\zz,\yy)\in\VV^2$: just take $\yy=\beta\xx$ for $\beta>0$ small enough. This is the $\widetilde{\ww}$-logarithmic Pexider equation, thus Lemma \ref{lem1} implies the existence of $\widetilde{\ww}$-logarithmic function $h_2$ and constant $\gamma$ such that
\begin{align*}
l_1(\xx)=h_2(\xx)+\gamma, \quad\xx\in\VV.
\end{align*}
Due to \eqref{l1} we obtain $\gamma=0$. Hence, \eqref{l1eq} implies that for any $\xx\in\DD$, 
\begin{align}\label{h3}
h(\xx)=h_2(\ee-\xx)+g(\xx).
\end{align}

By the symmetry of \eqref{mainB} we conclude that there exists a $\ww$-logarithmic function $h_3$ such that
\begin{align}\label{h2}
f(\xx)=h_3(\ee-\xx)+k(\xx).
\end{align} 
Using \eqref{h3} and \eqref{h2} to elimination of functions $f$ and $h$ in \eqref{mainB}, we get
for $(\xx,\yy)\in\DD_0$,
\begin{align}\label{almfin}
h_3(\ee-\xx)+k(\xx)+g(\gw(\ee-\xx)\yy)=h_2(\ee-\yy)+g(\yy)+k(\widetilde{\gw}(\ee-\yy)\xx).
\end{align}
Let us define for $\xx\in\VV$,
\begin{align*}
\bar{g}(\xx)=g(\xx)-h_3(\xx),\\
\bar{k}(\xx)=k(\xx)-h_2(\xx).
\end{align*}
Exploiting above equations in \eqref{almfin} we have for $(\xx,\yy)\in\DD_0$,
\begin{multline*}
h_3(\ee-\xx)+\left(\bar{k}(\xx)+h_2(\xx)\right)
+\left(\bar{g}(\gw(\ee-\xx)\yy)+h_3(\gw(\ee-\xx)\yy) \right)\\
=h_2(\ee-\yy)+\left(\bar{g}(\yy)+h_3(\yy)\right)
+\left(\bar{k}(\gw(\ee-\yy)\xx)+h_2(\gw(\ee-\yy)\xx )\right).
\end{multline*}
Since $\ee-\gw(\ee-\xx)\yy=\gw(\ee-\xx)(\ee-\xx-\yy)$ and $h_2$ and $h_3$ are $\widetilde{\ww}$ and $\ww$-logarithmic respectively ($h_2(\widetilde{\gw}(\ab)\bb)=h_2(\bb)-h_2(\ab)$ and $h_3(\gw(\ab)\bb)=h_3(\bb)-h_3(\ab)$ for $\ab,\bb\in\VV$), 
functions $\bar{g}$ and $\bar{k}$ satisfy following functional equation
\begin{align}
\bar{k}(\xx)+\bar{g}(\gw(\ee-\xx)\yy)=\bar{g}(\yy)+\bar{k}(\widetilde{\gw}(\ee-\yy)\xx),\quad(\xx,\yy)\in\DD_0.
\end{align}
Since $\bar{g}$ and $\bar{k}$ are continuous, Lemma \ref{H1} implies existence of continuous $\ww$- and $\widetilde{\ww}$-logarithmic function $h_1$ and real constants $g_0$, $k_0$ such that for $\xx\in\VV$,
\begin{align*}
\bar{g}(\xx)& =h_1(\ee-\xx)+g_0,\\
\bar{k}(\xx)& =h_1(\ee-\xx)+k_0,
\end{align*}
thus,
\begin{align*}
g(\xx) & =\bar{g}(\xx)+h_3(\xx)=h_1(\ee-\xx)+h_3(\xx)+g_0,\\
k(\xx) & =\bar{k}(\xx)+h_2(\xx)=h_1(\ee-\xx)+h_2(\xx)+k_0.
\end{align*}
Comparing above equality with expression for $k(\alpha\ee)$ we see that $k_0=C_4=0$.
By \eqref{h3} and \eqref{h2} we have
\begin{align*}
f(\xx) & =h_3(\ee-\xx)+k(\xx)=h_1(\ee-\xx)+h_2(\xx)+h_3(\ee-\xx),\\
h(\xx) & =h_2(\ee-\xx)+g(\xx)=h_1(\ee-\xx)+h_2(\ee-\xx)+h_3(\xx)+g_0,
\end{align*}
what completes the proof.
\end{proof}

Having solved fundamental equation of information for any multiplication algorithm it is easy to give the specification:
\begin{corollary}\label{cor1}
Assume that $f$, $g$, $h$ and $k$ are continuous real functions defined on $\DD$ and
\begin{align*}
f(\xx)+g(\gw_1(\ee-\xx)\yy)=h(\yy)+k(\gw_1(\ee-\yy)\xx),\quad(\xx,\yy)\in\DD_0,
\end{align*}
where $\gw_1(\xx)=\ww_1(\xx)^{-1}=\PP\left(\xx^{-1/2}\right)$. Then there exist constants $\kappa_j$, $j=1,2,3$ and $C_i$, $i=1,\ldots,4$, such that for any $\xx\in\DD$,
\begin{align}\begin{split}\label{tryk}
f(\xx) & = \kappa_1\log\det(\ee-\xx)+\kappa_2\log\det\xx+\kappa_3\log\det(\ee-\xx)+C_1, \\
g(\xx) & = \kappa_1\log\det(\ee-\xx)+\kappa_3\log\det\xx+C_2, \\
h(\xx) & = \kappa_1\log\det(\ee-\xx)+\kappa_2\log\det(\ee-\xx)+\kappa_3\log\det\xx+C_3, \\
k(\xx) & = \kappa_1\log\det(\ee-\xx)+\kappa_2\log\det\xx+C_4,
\end{split}\end{align}
and $C_1+C_2 = C_3+C_4$.
\end{corollary}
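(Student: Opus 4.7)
The plan is to derive Corollary \ref{cor1} as a direct specialization of Theorem \ref{inform} combined with Theorem \ref{w1th}. The key observation is that here $\ww = \widetilde{\ww} = \ww_1$, so the two notions of logarithmic Cauchy function coincide, and Theorem \ref{w1th} pins down each $h_i$ explicitly.

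First I would verify conditions $A$--$C$ for $\ww_1(\xx) = \PP(\xx^{1/2})$. Since $\PP$ is homogeneous of degree $2$ and $(s\xx)^{1/2} = s^{1/2}\xx^{1/2}$ for $s > 0$, we have $\ww_1(s\xx) = s\,\ww_1(\xx)$, so $A$ holds. Continuity at $\ee$ ($B$) is clear, as is the equality $\we = \PP(\ee) = Id_\VV$. For $C$, compute $\gw_1(\xx)\ee = \PP(\xx^{-1/2})\ee = \xx^{-1}$, and the inversion map is a bijection on $\VV$.

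Next I would apply Theorem \ref{inform} with $\ww = \widetilde{\ww} = \ww_1$. Since $\we = \twe = Id_\VV$, this yields real constants $C_i$ with $C_1 + C_2 = C_3 + C_4$ and continuous functions $h_1, h_2, h_3 \colon \VV \to \RR$, each of which is $\ww_1$-logarithmic (observe that $\ww_1 = \widetilde{\ww}_1$, so the $\ww$- and $\widetilde{\ww}$-logarithmic properties coincide), satisfying
\[
f(\xx) = h_1(\ee-\xx) + h_2(\xx) + h_3(\ee-\xx) + C_1,
\]
\[
g(\xx) = h_1(\ee-\xx) + h_3(\xx) + C_2,
\]
\[
h(\xx) = h_1(\ee-\xx) + h_2(\ee-\xx) + h_3(\xx) + C_3,
\]
\[
k(\xx) = h_1(\ee-\xx) + h_2(\xx) + C_4.
\]

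Finally I would apply Theorem \ref{w1th} to each $h_i$: there exists a generalized logarithmic function $H_i \colon (0,\infty) \to \RR$ such that $h_i(\xx) = H_i(\det \xx)$. Because the $h_i$ are continuous and $\det$ maps $\VV$ continuously onto $(0,\infty)$, each $H_i$ is continuous; a continuous solution of the Cauchy equation $H(ab) = H(a) + H(b)$ on $(0,\infty)$ is necessarily $H_i(a) = \kappa_i \log a$ for some $\kappa_i \in \RR$ (reduce to the ordinary Cauchy additive equation via $a = e^t$). Substituting $h_i(\xx) = \kappa_i \log\det\xx$ into the formulas above produces exactly \eqref{tryk}. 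No substantive obstacle is anticipated; the corollary is an immediate combination of the two main theorems together with the classical fact that continuous additive functions on $\RR$ are linear.
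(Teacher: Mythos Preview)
Your proposal is correct and follows exactly the route indicated in the paper, which simply states that the corollary follows from Theorem~\ref{w1th} and Theorem~\ref{inform}. The additional details you supply---verifying conditions $A$--$C$ for $\ww_1$, noting that $\we=\twe=Id_\VV$, and using continuity to pass from a generalized logarithmic $H_i$ to $\kappa_i\log$---are the natural fleshing-out of that one-line proof.
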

The proof quickly follows from Theorem \ref{w1th} and Theorem \ref{inform}.

In order to show an example of a multiplication algorithm for which the solution to generalized fundamental equation of information on $\VV$ is of different form than given in \eqref{tryk}, we should give full introduction regarding triangular group on $\VV$ and related Gauss decomposition. Plan of this paper was to keep it as concise as possible, therefore we decided to restrict to the cone $\VV_+$ of positive definite real symmetric matrices of rank $r$, where the Gauss decomposition on $\VV_+$ and the Cholesky decomposition coincide. We adhere to the notation from Introduction, $t_\xx$ is the lower triangular matrix from the Cholesky decomposition of $\xx=t_\xx\cdot t_\xx^T$. Then it is easy to see that $w_2(\xx)=t_\xx$ is a multiplication algorithm. Let $\Delta_k(\xx)$ denote the $k$th principal minor of $\xx$ and define for $s\in\RR^r$ the \emph{generalized power function}
$$\Delta_s(\xx)=\Delta_1(\xx)^{s_1-s_2}\Delta_2(\xx)^{s_2-s_3}\ldots\Delta_r(\xx)^{s_r}.$$
Then $\log\Delta_s(\xx)$ is the general form of a continuous $w_2$-logarithmic function (see \citet[Theorem 3.5]{wC2013}) and we have the following
\begin{corollary}\label{cor3}
Assume that $f$, $g$, $h$ and $k$ are continuous real functions defined on $\DD_+$ and
\begin{align*}
f(\xx)+g(t_{I-\xx}\cdot\yy\cdot t_{I-\xx}^T)=h(\yy)+k(t_{I-\yy}\cdot\xx\cdot t_{I-\yy}^T),\quad(\xx,\yy)\in\DD^+_0.
\end{align*}
Then there exist vectors $s_j\in\RR^r$, $j=1,2,3$ and constants $C_i$, $i=1,\ldots,4$, such that for any $\xx\in\DD_+$,
\begin{align*}
f(\xx) & = \log\Delta_{s_1}(I-\xx)+\log\Delta_{s_2}(\xx)+\log\Delta_{s_3}(I-\xx)+C_1, \\
g(\xx) & = \log\Delta_{s_1}(I-\xx)+\log\Delta_{s_3}(\xx)+C_2, \\
h(\xx) & = \log\Delta_{s_1}(I-\xx)+\log\Delta_{s_2}(I-\xx)+\log\Delta_{s_3}(\xx)+C_3, \\
k(\xx) & = \log\Delta_{s_1}(I-\xx)+\log\Delta_{s_2}(\xx)+C_4,
\end{align*}
and $C_1+C_2 = C_3+C_4$.
\end{corollary}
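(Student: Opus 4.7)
The plan is to apply Theorem~\ref{inform} to the cone $\VV_+$ with both multiplication algorithms taken to be $w_2(\xx)=t_\xx$, and then to substitute the known continuous $w_2$-logarithmic functions into the resulting formulas.

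First I would verify that $w_2$ satisfies the hypotheses A--C. Writing $t_\xx$ for the Cholesky factor with positive diagonal, we have $t_\xx \in GL(r,\RR) \subset G(\En)$ and $w_2(\xx)\yy = t_\xx \cdot \yy \cdot t_\xx^T$, so $w_2(\xx)\ee = t_\xx t_\xx^T = \xx$. Condition~A follows from $t_{s\xx}=\sqrt{s}\,t_\xx$, which gives $w_2(s\xx)\yy = s\,w_2(\xx)\yy$. Condition~B is the continuity of the Cholesky factorization, noting that $w_2(I)=t_I = I$. For condition~C, the induced division algorithm satisfies $\gw_2(\xx)\ee = t_\xx^{-1}t_\xx^{-T}=\xx^{-1}$, and inversion is a bijection of $\VV_+$ onto itself. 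Note in particular that $\we = \twe = Id_{\VV_+}$, so the operators $\we$ and $\twe$ appearing in the statement of Theorem~\ref{inform} disappear here.

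Next, Theorem~\ref{inform} provides continuous functions $h_1,h_2,h_3$ and constants $C_1,\dots,C_4$ with $C_1+C_2=C_3+C_4$ such that
\begin{align*}
f(\xx) &= h_1(I-\xx)+h_2(\xx)+h_3(I-\xx)+C_1,\\
g(\xx) &= h_1(I-\xx)+h_3(\xx)+C_2,\\
h(\xx) &= h_1(I-\xx)+h_2(I-\xx)+h_3(\xx)+C_3,\\
k(\xx) &= h_1(I-\xx)+h_2(\xx)+C_4,
\end{align*}
where $h_2$ and $h_3$ are continuous $w_2$-logarithmic functions, and $h_1$ is continuous and $w_2$-logarithmic (since $w=\widetilde{w}=w_2$, the two logarithmic conditions on $h_1$ collapse to a single one).

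Finally, I would invoke Theorem~3.5 of \cite{wC2013}, cited in the paragraph preceding the corollary, which states that every continuous $w_2$-logarithmic function on $\VV_+$ is of the form $\xx\mapsto\log\Delta_s(\xx)$ for some $s\in\RR^r$. Applying this to each of $h_1,h_2,h_3$ yields vectors $s_1,s_2,s_3\in\RR^r$ with $h_j(\xx)=\log\Delta_{s_j}(\xx)$ for $j=1,2,3$. Substituting these expressions into the formulas above gives exactly the claimed representation. There is no real obstacle here: once the three conditions A--C are checked for $w_2$ and the identification of continuous $w_2$-logarithmic functions with generalized power functions is invoked, the corollary is a direct specialization of Theorem~\ref{inform}.
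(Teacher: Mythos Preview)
Your proposal is correct and follows exactly the route the paper intends: the corollary is stated immediately after recalling that every continuous $w_2$-logarithmic function on $\VV_+$ has the form $\log\Delta_s$, and is meant as a direct specialization of Theorem~\ref{inform} with $\ww=\widetilde{\ww}=w_2$. Your explicit verification of conditions A--C for $w_2$ and the observation that $\we=\twe=Id_{\VV_+}$ are useful additions that the paper leaves implicit.
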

Observe that $\kappa\log\det\xx$ is the only continuous $w_1$- and $w_2$-logarithmic function, therefore we obtain the following
\begin{corollary}
Assume that $f$, $g$, $h$ and $k$ are continuous real functions defined on $\DD_+$ and
\begin{align*}
f(\xx)+g(t_{I-\xx}\cdot\yy\cdot t_{I-\xx}^T)=h(\yy)+k((I-\yy)^{-1/2}\cdot\xx\cdot (I-\yy)^{-1/2}),\quad(\xx,\yy)\in\DD^+_0.
\end{align*}
Then there exist constants $\kappa_1$, $\kappa_2$, $C_i$, $i=1,\ldots,4$, and vector $s_3\in\RR^r$, such that for any $\xx\in\DD_+$,
\begin{align*}
f(\xx) & = \kappa_1\log\det(\ee-\xx)+\kappa_2\log\det\xx+\log\Delta_{s_3}(I-\xx)+C_1, \\
g(\xx) & = \kappa_1\log\det(\ee-\xx)+\log\Delta_{s_3}(\xx)+C_2, \\
h(\xx) & = \kappa_1\log\det(\ee-\xx)+\kappa_2\log\det(I-\xx)+\log\Delta_{s_3}(\xx)+C_3, \\
k(\xx) & = \kappa_1\log\det(\ee-\xx)+\kappa_2\log\det\xx+C_4,
\end{align*}
and $C_1+C_2 = C_3+C_4$.
\end{corollary}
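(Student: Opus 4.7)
The plan is to apply Theorem \ref{inform} with $\ww=\ww_2$ (Cholesky) and $\widetilde{\ww}=\ww_1$ (symmetric square root), and then substitute the known continuous classifications of $\ww_1$- and $\ww_2$-logarithmic functions to identify the three building blocks $h_1,h_2,h_3$ that appear in that theorem.

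First, I would check that both $\ww_1$ and $\ww_2$ satisfy conditions $A$--$C$. Homogeneity of degree one follows from $\PP(s^{1/2}\xx^{1/2})=s\PP(\xx^{1/2})$ in the $\ww_1$ case, and from $t_{s\xx}=\sqrt{s}\,t_\xx$ (uniqueness of the Cholesky factor with positive diagonal) in the $\ww_2$ case. Continuity at $\ee$ is clear in both cases. Surjectivity of $\xx\mapsto\gw(\xx)\ee$ holds for $\ww_1$ because $\gw_1(\xx)\ee=\xx^{-1}$ and inversion is a bijection of $\VV_+$; for $\ww_2$ one checks that $\gw_2(\xx)\ee=t_\xx^{-1}(t_\xx^{-1})^T$ sweeps out all of $\VV_+$ as $\xx$ varies, since every element of $\VV_+$ admits a lower-triangular Cholesky factorization with positive diagonal. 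I would also observe that $\we=\ww_2(\ee)=Id_\VV$ and $\twe=\ww_1(\ee)=\PP(\ee)=Id_\VV$, so the formulas furnished by Theorem \ref{inform} simplify with $\we\xx=\twe\xx=\xx$.

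With Theorem \ref{inform} in force, $h_1$ must be simultaneously $\ww_1$- and $\ww_2$-logarithmic, $h_2$ is $\ww_1$-logarithmic, and $h_3$ is $\ww_2$-logarithmic. Theorem \ref{w1th}, combined with the continuity hypothesis (which forces the generalized logarithmic function there to equal $\kappa\log$ on $(0,\infty)$), says that every continuous $\ww_1$-logarithmic function has the form $\kappa\log\det\xx$. By \cite[Theorem 3.5]{wC2013} every continuous $\ww_2$-logarithmic function equals $\log\Delta_s(\xx)$ for some $s\in\RR^r$. Hence $h_2(\xx)=\kappa_2\log\det\xx$ and $h_3(\xx)=\log\Delta_{s_3}(\xx)$, while the simultaneous constraint on $h_1$ forces $\log\Delta_{s}=\kappa_1\log\det$, which happens only when $s=(\kappa_1,\ldots,\kappa_1)$; so $h_1(\xx)=\kappa_1\log\det\xx$.

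Plugging these three expressions into the four formulas of Theorem \ref{inform} gives exactly the claimed shapes of $f,g,h,k$, and the relation $C_1+C_2=C_3+C_4$ is inherited directly from the same theorem. I do not anticipate any real obstacle here: the only substantive moment is the mixed-classification step for $h_1$, which the paper has already flagged in the remark preceding the statement, and everything else is a matter of bookkeeping.
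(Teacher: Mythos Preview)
Your proposal is correct and follows essentially the same approach as the paper: the paper's entire justification is the observation preceding the statement that ``$\kappa\log\det\xx$ is the only continuous $w_1$- and $w_2$-logarithmic function,'' after which the result drops out of Theorem~\ref{inform} together with Theorem~\ref{w1th} and \cite[Theorem~3.5]{wC2013}. Your write-up simply makes explicit the verification of conditions $A$--$C$ and the fact that $\we=\twe=Id_\VV$, which the paper leaves implicit.
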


For necessary introduction on triangular group on $\VV$ we refer to \citet{FaKo1994} (see also \citet{wC2013}). 

We will now give the solution to fundamental equation of information on symmetric cones for any two multiplication algorithms, but with additional assumption that any two unknown functions are invariant under the automorphism group $K$ ($f(k\xx)=f(\xx)$ for any $k\in K$ and $\xx\in\VV$). In this way we obtain the same form of the solution, regardless of the choice of $\ww$ and $\widetilde{\ww}$. 
\begin{lemma}
Let $h_1$ and $h_2$ be two continuous $\ww$-logarithmic functions. If function $f(\xx)=h_1(\xx)+h_2(\ee-\xx)$, $\xx\in\DD$ is $K$-invariant, then there exist real constants $\kappa_1$ and $\kappa_2$ such that $f(\xx)=\kappa_1\log\det\xx+\kappa_2\log\det(\ee-\xx)$.
\end{lemma}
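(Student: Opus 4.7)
The plan is to reduce to Theorem \ref{XXX} by showing that $K$-invariance of $f$ forces each of $h_1$ and $h_2$ to be individually $K$-invariant on all of $\VV$. Once that is established, Theorem \ref{XXX} provides generalized logarithmic $H_i$ with $h_i=H_i\circ\det$; the continuity of $h_i$ together with the continuity and surjectivity of $\det\colon\VV\to(0,\infty)$ makes each $H_i$ continuous on $(0,\infty)$, hence of the form $H_i(t)=\kappa_i\log t$, which is the desired conclusion.

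The preparatory ingredient I will use repeatedly is the scaling identity
\[
h(s\xx)=h(\xx)+\kappa\log s,\quad s>0,\;\xx\in\VV,
\]
valid for every continuous $\ww$-logarithmic $h$. This follows by putting $\xx=s\ee$ in \eqref{wC}, invoking the homogeneity condition $A$ for $\ww$ (so $\ww(s\ee)=s\ww(\ee)$ and $\ww(\ee)\yy$ still ranges over $\VV$), and noting that $s\mapsto h(s\ee)$ is a continuous generalized logarithmic function on $(0,\infty)$. Write $\kappa_1,\kappa_2$ for the constants corresponding to $h_1,h_2$; in particular $h_i(\ee)=0$.

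The main step is as follows. Since $k\ee=\ee$ gives $k(\ee-\xx)=\ee-k\xx$, the $K$-invariance of $f$ reads
\[
h_1(k\xx)+h_2(k(\ee-\xx))=h_1(\xx)+h_2(\ee-\xx),\quad \xx\in\DD,\;k\in K.
\]
I will substitute $\xx\mapsto\alpha\xx$ for $\alpha\in(0,1)$ (which keeps $\alpha\xx$ in $\DD$, since $\ee-\alpha\xx$ is a convex combination of $\ee$ and $\ee-\xx$), cancel the $\kappa_1\log\alpha$ terms on both sides via the scaling identity, and let $\alpha\to 0$. Since $\ee-\alpha\xx\to\ee$ and $h_2$ is continuous at $\ee$ with $h_2(\ee)=0$, both $h_2$-terms vanish in the limit and we obtain $h_1(k\xx)=h_1(\xx)$ for $\xx\in\DD$. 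Applying the scaling identity to a small dilation $s\yy\in\DD$ of an arbitrary $\yy\in\VV$ extends this to $h_1(k\yy)=h_1(\yy)$ on all of $\VV$. Substituting back into the displayed equation then gives $h_2(k\yy)=h_2(\yy)$ on $\DD$, and the same dilation trick extends it to $\VV$.

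The one point that needs attention is the passage to the limit $\alpha\to 0$, but the continuity of $h_1,h_2$ on $\DD$ together with the fact that $\ee$ lies in $\DD$ makes this routine; no other step presents a real obstacle. With $h_1$ and $h_2$ thereby continuous, $K$-invariant and $\ww$-logarithmic on $\VV$, applying Theorem \ref{XXX} to each and invoking continuity of the resulting logarithmic functions completes the argument.
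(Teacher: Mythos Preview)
Your approach is essentially identical to the paper's: substitute $\xx\mapsto\alpha\xx$, use the scaling identity to cancel the $h_1(\alpha\ee)$ contributions, let $\alpha\to 0$ so the $h_2$-terms vanish by continuity at $\ee$, conclude $h_1$ is $K$-invariant, and feed back to get the same for $h_2$; then invoke Theorem~\ref{XXX}.

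One small correction: in your remark about the limit step you write that ``$\ee$ lies in $\DD$''. It does not---$\ee-\ee=0\notin\VV$, so $\ee\notin\DD$. What makes the passage to the limit legitimate is that $h_1,h_2$ are, by hypothesis, continuous $\ww$-logarithmic functions on all of $\VV$ (not merely on $\DD$), and $\ee\in\VV$ with $h_2(\ee)=0$. With that amendment your argument is correct and matches the paper's.
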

\begin{proof}
According to Theorem \ref{XXX} it is enough to show that both $h_1$ and $h_2$ are $K$-invariant. 
The following equality holds for any $k\in K$ and $\xx\in\DD$,
\begin{align*}
h_1(\alpha\xx)+h_2(\ee-\alpha\xx)=f(\alpha\xx)=f(\alpha k\xx)=h_1(\alpha k\xx)+h_2(\ee-\alpha k\xx)
\end{align*}
where $\alpha\in(0,1]$.
But we have $h_1(\alpha \xx)=h_1(\xx)+h_1(\alpha\ee)$ and $k\ee=\ee$, thus
\begin{align*}
h_1(\xx)+h_2(\ee-\alpha\xx)=h_1(k\xx)+h_2(\ee-\alpha k\xx)
\end{align*}
holds for any $\alpha\in(0,1]$. Passing to the limit as $\alpha\to0$ ($h_2$ is continuous and $h_2(\ee)=0$) implies that $h_1$ is $K$-invariant, and so is $h_2$.
\end{proof}

Following Corollary is a consequence of the above Lemma and Theorem \ref{inform}.
\begin{corollary}\label{cor2}
Assume that $f$, $g$, $h$ and $k$ are real functions defined on $\DD$ and \eqref{mainB} holds for multiplication algorithms $\ww=\gw^{-1}$ and $\widetilde{\ww}=\widetilde{\gw}^{-1}$ that satisfy conditions $A-C$. If any two unknown functions are $K$-invariant, then there exist constants $\kappa_j$, $j=1,2,3$ and $C_i$, $i=1,\ldots,4$, such that for any $\xx\in\DD$ we have \eqref{tryk}.
\end{corollary}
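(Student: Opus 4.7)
The plan is to combine Theorem~\ref{inform} with the preceding Lemma. By Theorem~\ref{inform} there exist continuous functions $h_1$ (both $\ww$- and $\widetilde{\ww}$-logarithmic), $h_2$ ($\widetilde{\ww}$-logarithmic), $h_3$ ($\ww$-logarithmic), and constants $C_1,\ldots,C_4$ (with $C_1+C_2=C_3+C_4$) giving the decomposition of $f,g,h,k$ stated there. In view of Theorem~\ref{XXX}, proving that each $h_i$ is $K$-invariant would force $h_i(\xx)=\kappa_i\log\det\xx$, and substituting back produces \eqref{tryk} once one notes that $\we,\twe\in K$ (since $\we\ee=\ee$ by the defining property of a multiplication algorithm, and analogously for $\twe$), so that $\det(\we\xx)=\det\xx$ and $\det(\ee-\we\xx)=\det(\we(\ee-\xx))=\det(\ee-\xx)$, and similarly with $\twe$ in place of $\we$.

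The heart of the proof is to show that $K$-invariance of \emph{any two} of the unknowns forces $K$-invariance of each individual $h_i$. I would treat the four possible sources of $K$-invariance separately. If $g$ is $K$-invariant, the composed function $\tilde g(\xx):=g(\gwe\xx)=h_1(\ee-\xx)+h_3(\xx)+C_2$ is also $K$-invariant (because $\gwe\in K$ and $K$ is a group, so $\gwe K\gwe^{-1}=K$); now $h_1$ and $h_3$ are \emph{both} $\ww$-logarithmic, and the preceding Lemma applies directly to give that $h_1$ and $h_3$ are $K$-invariant. The case of $k$ is handled analogously via $\tilde k(\xx):=k(\twe\xx)$, yielding $K$-invariance of $h_1$ and $h_2$. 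For $K$-invariance of $f$ I would write $f(\xx)-C_1=H_1(\ee-\xx)+h_2(\xx)$ where $H_1:=h_1+h_3$ is $\ww$-logarithmic and $h_2$ is $\widetilde{\ww}$-logarithmic, and rerun the argument of the preceding Lemma verbatim (substitute $\xx\mapsto\alpha\xx$, use the identity $h(\alpha\xx)=h(\alpha\ee)+h(\xx)$ valid for any logarithmic function under condition $A$, and let $\alpha\to 0$ using $h_2(\ee)=0$); this yields $K$-invariance of $H_1$ and $h_2$ on $\DD$, extended to $\VV$ by homogeneity. Similarly, $K$-invariance of $h$ gives $K$-invariance of $H_2:=h_1+h_2$ and of $h_3$.

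Finally I would run through the six possible pairs: in every case the information collected contains each of $h_1,h_2,h_3$ (possibly via a difference such as $h_1=H_1-h_3$ or $h_1=H_2-h_2$). For example the pair $(f,h)$ yields $K$-invariance of $h_2,h_3,H_1,H_2$, hence of $h_1=H_1-h_3$; the remaining pairs are routine. Applying Theorem~\ref{XXX} to each $h_i$ and substituting into the formulas from Theorem~\ref{inform} completes the proof, as described in the opening paragraph. The only delicate point is the mixed $\ww$/$\widetilde{\ww}$-logarithmic situation arising for $f$ and $h$, but the argument of the preceding Lemma carries over without modification since it only uses continuity, the scaling identity $h(\alpha\xx)=h(\alpha\ee)+h(\xx)$, and $h(\ee)=0$—all of which hold for logarithmic functions associated with either multiplication algorithm.
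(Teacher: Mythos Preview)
Your proposal is correct and follows exactly the approach the paper indicates: apply Theorem~\ref{inform}, then use the preceding Lemma (and Theorem~\ref{XXX}) to force each $h_i$ to be $K$-invariant. The paper's own proof is a one-line remark that the corollary ``is a consequence of the above Lemma and Theorem~\ref{inform}''; you have simply supplied the details the paper omits, including the observation that the Lemma's argument extends verbatim to the mixed $\ww$/$\widetilde{\ww}$ situation needed for $f$ and $h$, and the routine case-check over the six pairs.
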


\bibliographystyle{plainnat}

\bibliography{Bibl}

\end{document}